\documentclass[11pt]{amsart}

\usepackage{amsmath,amssymb,amsthm,mathtools}
\usepackage{booktabs}
\usepackage{longtable}
\usepackage{enumitem}

\newtheorem{theorem}{Theorem}[section]
\newtheorem{proposition}[theorem]{Proposition}
\newtheorem{lemma}[theorem]{Lemma}
\newtheorem{corollary}[theorem]{Corollary}
\newtheorem{remark}[theorem]{Remark}

\newcommand{\Aut}{\operatorname{Aut}}
\newcommand{\Pone}{\mathbb P^1}
\newcommand{\Char}{\operatorname{char}}
\newcommand{\lcm}{\operatorname{lcm}}
\newcommand{\wpw}{\wp_2}

\title{Cyclic automorphisms beyond the canonical bound}

\author{Ahmad Kazemifard}
\address{Department of Mathematics, Faculty of Mathematical Sciences and Computer,
Shahid Chamran University of Ahvaz, Ahvaz, Iran}
\email{a.kazemifard@scu.ac.ir}

\author{Saeed Tafazolian}
\address{Department of Mathematics, IMECC, University of Campinas (UNICAMP), Campinas, SP, Brazil}
\email{saeed@unicamp.br}
\keywords{Algebraic curves, automorphism groups, cyclic groups, wild ramification, Artin--Schreier--Witt covers}
\subjclass[2020]{14H37, 14H05}
\date{}

\begin{document}

\begin{abstract}
Let $X$ be a smooth projective curve of genus $g\geq2$ over an
algebraically closed field $K$, and suppose that $\Aut(X)$ contains a
cyclic subgroup $G$ of order $N>2g-2$.  The cases $N\geq2g+1$ are known.
We treat the two boundary values $N=2g$ and $N=2g-1$.  In the tame case
we obtain an explicit finite list of ramification signatures.  In odd
characteristic we prove that the Sylow $p$-subgroup of $G$ has order
$p$ and determine all wild boundary cases by a direct ramification
analysis; the resulting curves are Artin--Schreier--Kummer fiber
products.  In characteristic two we prove that the $2$-part has order at most four
and classify all wild boundary cases, including Artin--Schreier--Witt
normal forms for the $C_4$ and $C_{12}$ cases.
\end{abstract}

\maketitle

\section{Introduction}

Let $K$ be an algebraically closed field and let $X/K$ be a smooth
projective geometrically irreducible curve of genus $g\geq2$.  We study
cyclic subgroups
\[
        G=\langle\sigma\rangle\cong C_N\leq\Aut(X)
\]
whose order lies just above the degree $2g-2$ of the canonical divisor.

Over the complex numbers, Harvey gave necessary and sufficient conditions
for a Fuchsian signature to admit a surface-kernel epimorphism onto a
cyclic group \cite{Harvey1966}.  Nakagawa studied upper bounds and large
orders for cyclic automorphisms of compact Riemann surfaces
\cite{Nakagawa1984}.  Irokawa and Sasaki classified the range
$N\geq2g+1$ in characteristic zero \cite{IrokawaSasaki1995}.

In positive characteristic, the odd-characteristic analogue of the
Irokawa--Sasaki theorem was proved by Dionigi, Giulietti and Timpanella
\cite{DGT2026}.  The characteristic-two case was proved by Timpanella
\cite{Timpanella2026}.  Thus the only integer values strictly larger than
$2g-2$ not covered by those theorems are
\[
        N=2g,\qquad N=2g-1.
\]
The purpose of this paper is to treat these two values.

Several results on large prime-order and prime-square automorphisms are
closely related but do not subsume this boundary problem.  Arakelian and
Speziali classify curves admitting prime-order automorphisms in the cases
$q=2g+1$ and $q=g+1$, and obtain partial results for $q=g,g-1$
\cite{ArakelianSpeziali2021}.  In the form quoted in \cite[Theorem~2.3]{ArakelianSpeziali2021},
Homma's prime-order theorem \cite{Homma1980} says that a prime order
$q$ satisfies $q\leq g+1$, or $q=2g+1$, with the case
$q=2g-1$ occurring only when $g=2$.  Thus the prime-order overlap with
the present $N=2g-1$ boundary is only the genus-two case $q=3$.  Their earlier work on generalized Fermat curves concerns
a different problem: direct products $C_m\times C_n$ of tame cyclic
groups over finite fields, with both quotient curves rational
\cite{ArakelianSpeziali2017}.  More recently, Arakelian and Kian studied
automorphism groups of prime-square order \cite{ArakelianKian2026}; that
work is relevant background for the special $\ell^2$ subproblem, but not
a classification of cyclic groups of arbitrary composite order at
$N=2g$ or $N=2g-1$.

The tame signatures below are consequences of the classical cyclic-cover
conditions, and no claim of novelty is made for their existence over
$\mathbb C$.  The main point is the uniform positive-characteristic
boundary classification, especially the wild cases.

In characteristic two, the argument is modeled on Timpanella's treatment
of the range $N\geq2g+1$ \cite[Section~3]{Timpanella2026}: the boundary
cases whose $2$-part has order two arise as equality cases of the
inequalities established there.  The cases with $4\mid N$, namely the
$C_4$-family in genus two and the $C_{12}$-family in genus six
(Theorems~\ref{thm:C4normal} and~\ref{thm:C12normal}), have no
counterpart in the range $N\geq2g+1$, where $4\nmid N$
\cite[Theorem~1.1]{Timpanella2026}.

\section{Preliminaries}

For a finite subgroup $H\leq\Aut(X)$ we write $X/H$ for the nonsingular
model of $K(X)^H$.  Throughout the paper, an affine Kummer,
Artin--Schreier, or Artin--Schreier--Witt equation denotes the smooth
projective normalization of the corresponding affine curve.  If $P\in X$,
let $H_P$ be the stabilizer of $P$ and
$H_P^{(i)}$ the $i$th lower ramification group.  When upper numbering is
used, it is related to the lower numbering by Herbrand's function; we use
the conventions of \cite[Chapter~IV]{Serre1979}.  Put
\[
       d_P=\sum_{i\geq0}\bigl(|H_P^{(i)}|-1\bigr).
\]
If $Q\in X/H$ is a branch point and $P$ lies above $Q$, set
\[
       e_Q=|H_P|,\qquad \delta_Q=\frac{d_P}{e_Q}.
\]
The Riemann--Hurwitz formula divided by $|H|$ is
\[
 \frac{2g(X)-2}{|H|}
 =
 2g(X/H)-2+\sum_Q\delta_Q.
 \tag{2.1}
\]

\subsection{The quotient}

\begin{proposition}\label{prop:rationalquotient}
If $G\cong C_N$ and $N>2g-2$, then
\[
        X/G\cong\Pone,
\]
and
\[
        \sum_Q\delta_Q
        =
        2+\frac{2g-2}{N}
        <3.
        \tag{2.2}
\]
\end{proposition}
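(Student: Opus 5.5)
The plan is to apply the normalized Riemann--Hurwitz formula (2.1) with $H=G$, writing $h=g(X/G)$, and to rule out $h\geq1$ using the hypothesis $N>2g-2$. Since $g\geq2$, the left side of (2.1) satisfies
\[
0<\frac{2g-2}{N}<1 .
\]
Every branch point $Q$ has $\delta_Q>0$, because $d_P\geq e_Q-1\geq1$.

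First I treat $h\geq2$. Then the right side of (2.1) is at least $2h-2\geq2$, which contradicts the bound above.

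Next I treat $h=1$. Now (2.1) reads $\sum_Q\delta_Q=(2g-2)/N\in(0,1)$, so there is at least one branch point. I will show that each individual $\delta_Q$ is at least $1/2$, which forces at most one branch point.
\begin{itemize}
\item In the tame case, $\delta_Q=(e_Q-1)/e_Q\geq1/2$.
\item In the wild case, $d_P\geq 2(|H_P^{(1)}|-1)+(e_Q-|H_P^{(1)}|)\geq e_Q$, so $\delta_Q\geq1$. This is already impossible, so only tame branch points can occur.
\end{itemize}

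It remains to exclude exactly one tame branch point. Here the key input is that $G$ is abelian. A cyclic cover of an elliptic curve that is branched over a single point cannot exist: in the tame abelian setting, the product of the local monodromy generators lies in the commutator subgroup, which is trivial for $G$ abelian. So the single inertia generator is trivial, contradicting ramification. I plan to phrase this via class field theory or Kummer theory for the prime-to-$p$ part. Concretely, pass to a subcover of prime degree $\ell\neq p$ in which $Q$ still ramifies. That subcover is a Kummer cover $y^\ell=f$ with $\mathrm{div}(f)\equiv0\bmod\ell$ away from $Q$. Then $\deg\mathrm{div}(f)=0$ forces $v_Q(f)\equiv0\bmod\ell$, so the subcover is unramified at $Q$, a contradiction. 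The $p$-part of the inertia group is trivial here because the ramification is tame. Hence $h=0$, that is, $X/G\cong\Pone$.

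With $h=0$, formula (2.1) becomes $(2g-2)/N=-2+\sum_Q\delta_Q$, which gives (2.2). The strict inequality $<3$ is exactly the bound $(2g-2)/N<1$.

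The main obstacle I expect is the single-branch-point exclusion in the genus-one case. I need to make sure a ramified prime-degree Kummer subcover can be chosen, by taking $\ell$ dividing the order of the tame inertia group $I$ and the quotient of $G$ by the subgroup of index $\ell$ not containing $I$. The rest is bookkeeping.
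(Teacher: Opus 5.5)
Your overall route matches the paper's: you exclude $h\geq2$ from (2.1), exclude wild points over an elliptic base by $\delta_Q\geq1$, and then exclude a single tame branch point by a Kummer degree count. Those steps are fine. The gap is exactly where you expected it. The reduction to a subcover of \emph{prime} degree $\ell$ in which $Q$ ramifies cannot always be carried out. A cyclic group has a unique subgroup $H_\ell$ of index $\ell$, so there is no freedom to choose one ``not containing $I$''. Moreover, $I\subseteq H_\ell$ whenever $v_\ell(|I|)<v_\ell(N)$. If this happens for every prime $\ell$ dividing $|I|$, then every prime-degree quotient of $X\to X/G$ is unramified at $Q$, and your argument does not rule the configuration out. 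For example, take $\Char(K)\neq2$, $G\cong C_4$ and $|I|=2$. The only quadratic quotient is then $X/I\to X/G$, which is \'etale.

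The repair is to skip the prime-degree step and use Kummer theory in composite degree, as the paper does. Let $G_p$ be the Sylow $p$-subgroup of $G$ (trivial in characteristic zero). Tameness gives $p\nmid|I|$, so $I\cap G_p=1$. Hence $Q$ is still ramified in the cyclic cover $X/G_p\to X/G$, whose degree $d=[G:G_p]$ is prime to $p$. Write $K(X/G_p)=K(X/G)(f^{1/d})$. The ramification index at $P$ is $d/\gcd(d,v_P(f))$ for every $d$ prime to $p$, not only for primes. Therefore $d\mid v_P(f)$ for all $P\neq Q$, and $\deg\operatorname{div}(f)=0$ forces $d\mid v_Q(f)$, a contradiction.

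Your commutator heuristic would also handle composite orders directly. In positive characteristic, however, it needs Grothendieck's comparison of the tame fundamental group with the profinite completion of the topological one, compatibly with inertia. The Kummer count is the more elementary way to finish.
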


\begin{proof}
Let $g_0=g(X/G)$.  Since $0<(2g-2)/N<1$, the cover is not unramified.
If $g_0\geq2$, then the right-hand side of \textup{(2.1)} is at least
two, a contradiction.

Suppose $g_0=1$.  A wild branch point satisfies $\delta_Q\geq1$, already
contradicting \textup{(2.1)}.  Hence every ramified point is tame.  A nontrivial cyclic tame cover cannot ramify at exactly one point.
Indeed, by Kummer theory one may write the relevant cyclic quotient as
$K(X)=K(X/G)(f^{1/d})$.  If all valuations of $f$ away from a single
point $Q$ are divisible by $d$, then the degree-zero condition on
$\operatorname{div}(f)$ forces $v_Q(f)$ to be divisible by $d$ as well,
so $Q$ is unramified.  Thus there are at least two branch points, and
each contributes at least $1/2$.  Therefore
\[
        \frac{2g-2}{N}=\sum_Q\delta_Q\geq1,
\]
again a contradiction.  Hence $g_0=0$, and \textup{(2.2)} follows from
\textup{(2.1)}.
\end{proof}

In characteristic two, the same statement, with the same proof, appears in
\cite[Lemma~2.6]{Timpanella2026}.

\subsection{Cyclic tame covers of the projective line}

\begin{lemma}\label{lem:lcm}
Let $Y\to\Pone$ be a nontrivial cyclic tame cover of degree $m$, with
ramification indices $e_1,\ldots,e_r$.  Then
\[
  \lcm(e_1,\ldots,e_r)=m,
\]
and, for every $i$,
\[
  \lcm(e_1,\ldots,\widehat{e_i},\ldots,e_r)=m.
  \tag{2.3}
\]
In particular, if $r=2$, then $e_1=e_2=m$ and $Y$ is rational.  If
$r=3$ and $d_i=m/e_i$, then $d_1,d_2,d_3$ are pairwise coprime.
\end{lemma}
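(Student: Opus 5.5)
We will use the description of a cyclic tame cover of $\Pone$ through its branch data in the cyclic group. Since the cover is Galois with group $C_m$ and tame, the inertia group at each branch point $Q_i$ is cyclic of order $e_i$. Because $C_m$ has a unique subgroup of each order, that inertia group is the subgroup $I_i$ of order $e_i$. The tame fundamental group of $\Pone$ minus $r$ points (prime-to-$p$ part, or via Kummer theory as in the proof of Proposition~\ref{prop:rationalquotient}) is generated by inertia generators $\gamma_1,\dots,\gamma_r$ subject to $\gamma_1\cdots\gamma_r=1$. Concretely, write $K(Y)=K(x)(f^{1/m})$ with $f=\prod (x-a_i)^{n_i}$, where $\sum n_i\equiv 0 \pmod m$ so that $\infty$ is accounted for. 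Then $e_i=m/\gcd(n_i,m)$, and the images $c_i\in\mathbb Z/m$ of the $\gamma_i$ correspond to the exponents $n_i$.

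The plan has three steps.

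\emph{Step 1 (surjectivity).} Since $Y$ is irreducible, the $c_i$ generate $\mathbb Z/m$, equivalently $\gcd(n_1,\dots,n_r,m)=1$. The subgroup generated by all the $c_i$ is the subgroup of order $\lcm(e_1,\dots,e_r)$, because in a cyclic group the subgroup generated by subgroups of orders $e_i$ has order equal to their lcm. Hence $\lcm(e_1,\ldots,e_r)=m$.

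\emph{Step 2 (dropping one index, (2.3)).} The relation $\sum c_i=0$ gives $c_i=-\sum_{j\ne i}c_j$. So the $c_j$ with $j\neq i$ already generate the whole group, and the same lcm argument gives (2.3). This is the step carrying the real content. The main care needed is to justify the product-one relation in positive characteristic without invoking topology. For this I would use Kummer theory directly, as in Proposition~\ref{prop:rationalquotient}: the divisor of $f$ has degree zero, so $\sum_i v_{Q_i}(f)\equiv 0\pmod m$, which is exactly $\sum c_i=0$.

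\emph{Step 3 (consequences).} If $r=2$, then (2.3) gives $e_1=m$ and $e_2=m$ directly. Riemann--Hurwitz then reads $2g(Y)-2=-2m+2(m-1)=-2$, so $g(Y)=0$ and $Y$ is rational.

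If $r=3$, suppose a prime $\ell$ divides both $d_1$ and $d_2$. Then $v_\ell(e_1)<v_\ell(m)$ and $v_\ell(e_2)<v_\ell(m)$, so $\lcm(e_1,e_2)$ has $\ell$-adic valuation strictly below $v_\ell(m)$. This contradicts (2.3) with $i=3$. Hence the $d_i$ are pairwise coprime.

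I expect no serious obstacle. The only delicate point is Step 2's relation $\sum c_i=0$, which reduces to the degree-zero property of principal divisors, as already exploited in the proof of Proposition~\ref{prop:rationalquotient}.
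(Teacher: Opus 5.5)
Your proof is correct, but it follows a different route from the paper's.

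\textbf{The paper's route.} The paper argues intrinsically. For fixed $i$ it lets $B_i$ be the subgroup generated by the inertia groups over the branch points other than the $i$th. If $B_i\neq C_m$, then $Y/B_i\to\Pone$ is a nontrivial tame cyclic cover branched over at most one point, which Riemann--Hurwitz excludes ($2g-2\le-2d+(d-1)<-2$). The lcm statement then follows because subgroups of a cyclic group generate it exactly when the lcm of their orders is the group order.

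\textbf{Your route.} You work instead with the exponent vector of a Kummer generator $f=c\prod(x-a_i)^{n_i}$. The degree-zero relation $\sum n_i\equiv0\pmod m$ and the generation condition $\gcd(n_1,\dots,n_r,m)=1$ together imply that any $r-1$ of the $n_i$ still generate $\mathbb Z/m$. Since $\langle n_j\rangle$ has order $e_j$, this gives (2.3).

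\textbf{The step to spell out.} You should not pass over the generation condition with ``equivalently''. This is precisely where the hypothesis that the base is $\Pone$ enters; over an elliptic curve it fails, since isogenies give unramified cyclic covers. Given your normal form it takes one line: if $d>1$ divides every $n_i$ and $m$, then $f$ is a $d$th power in $K(x)$ (the constant $c$ is a $d$th power because $K$ is algebraically closed), so $[K(Y):K(x)]\le m/d$. But producing that normal form already uses that degree-zero divisors on $\Pone$ are principal. So $\operatorname{Pic}^0(\Pone)=0$ does in your argument the job that Riemann--Hurwitz does in the paper's.

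\textbf{Comparison.} Your route buys explicitness: it yields directly the exponent residues that the paper later manipulates when it writes down Kummer representatives. The paper's route needs no normal form and is the same mechanism as Lemma~\ref{lem:generate-stab}.

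Finally, your remark on the tame fundamental group is dispensable. In characteristic $p$ the presentation by inertia generators with product one rests on Grothendieck's specialization theorem, which is much heavier than needed here; the Kummer computation alone suffices.
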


\begin{proof}
Fix $i$ and let $B_i$ be the subgroup generated by the stabilizers over
all branch points except the $i$th one.  If $B_i\neq C_m$, then
$Y/B_i\to\Pone$ is a nontrivial tame cyclic cover ramified over at most
one point.  This is impossible: if its degree is $d>1$, Riemann--Hurwitz
would give
\[
  2g(Y/B_i)-2\leq -2d+(d-1)<-2.
\]
Thus $B_i=C_m$.  In a cyclic group, subgroups generate the full group if
and only if the least common multiple of their orders is the group order,
which proves \textup{(2.3)}.

If $r=2$, omitting either index gives $e_1=e_2=m$; Riemann--Hurwitz
then gives $g(Y)=0$.  Finally, if $r=3$ and a prime divided both $d_i$
and $d_j$, then $\lcm(e_i,e_j)<m$, contradicting \textup{(2.3)} after
omitting the third index.
\end{proof}

Compare \cite[Lemma~2.1]{Timpanella2026}, where the same statements are
proved for Kummer covers of odd degree.

\subsection{A local lemma for inertia with $p$-part of order $p$}

\begin{lemma}\label{lem:oddlocal}
Assume $\Char(K)=p>0$.  Let a cyclic inertia group have order $pt$, where
$(p,t)=1$.  Then there is a positive integer $j$ such that
\[
 C_p=I^{(1)}=\cdots=I^{(j)},\qquad I^{(j+1)}=1,
\]
and
\[
        t\mid j.
\]
Writing $j=tn$, one has $p\nmid n$, and
\[
 \delta
 =
 1-\frac1{pt}+\frac{n(p-1)}p.
 \tag{2.4}
\]
Moreover, after quotienting by the tame subgroup $C_t$, the resulting
Artin--Schreier extension has reduced pole order $n$.
\end{lemma}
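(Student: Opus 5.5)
We use the standard structure of the ramification filtration of a cyclic inertia group $I\cong C_{pt}=C_p\times C_t$ at a point $P$ with uniformizer $\pi$. By Serre, Chapter IV, $I^{(0)}=I$ and $I^{(1)}$ is the unique Sylow $p$-subgroup, which here is $C_p$. Since $C_p$ has no proper nontrivial subgroups, the filtration $I^{(1)}\supseteq I^{(2)}\supseteq\cdots$ is constant equal to $C_p$ up to some index $j\geq1$ and then trivial. This gives the first display. The index $j$ is characterized by $v_P(\tau(\pi)-\pi)=j+1$ for a generator $\tau$ of $C_p$.

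For $t\mid j$ we use the commutator relation in Serre, Chapter IV, Proposition 9. If $s\in I^{(0)}$ and $\tau\in I^{(j)}$ with $j\geq1$, then $s\tau s^{-1}\tau^{-1}\in I^{(j+1)}$ and $\theta_j(s\tau s^{-1})=\theta_0(s)^j\theta_j(\tau)$. Here $\theta_0\colon I^{(0)}/I^{(1)}\to K^*$ is injective and $\theta_j\colon I^{(j)}/I^{(j+1)}\to K$ is the standard embedding. Since $I$ is abelian, $s\tau s^{-1}=\tau$, so $\theta_0(s)^j=1$ whenever $\theta_j(\tau)\neq0$. Take $s$ to generate $C_t$. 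Then $\theta_0(s)$ is a primitive $t$-th root of unity, so $t\mid j$. Write $j=tn$.

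To get $p\nmid n$ and identify $n$, we pass to $Y=X/C_t$. The image $\bar P$ of $P$ has inertia $C_p=I/C_t$, and $e(P/\bar P)=t$. Herbrand's function gives upper numbering for the quotient: $\varphi(j)=\int_0^j dx/[I^{(0)}:I^{(x)}]=j/t=n$. The ramification break of $C_p$ at $\bar P$ is therefore $n$, by Herbrand's theorem on quotients. For an Artin--Schreier extension $y^p-y=f$ with $f$ in reduced form at $\bar P$, meaning the pole order is not divisible by $p$, this break equals the pole order of $f$. So the reduced pole order is $n$, and $p\nmid n$. This is the main subtle step. It needs care that the lower break of the quotient $C_p$ at $\bar P$ equals the upper break $\varphi(j)$, and that for a group of order $p$ upper and lower breaks coincide. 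It also uses the standard fact that the Artin--Schreier break is coprime to $p$.

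For the formula (2.4), count $d_P=(pt-1)+j(p-1)=(pt-1)+tn(p-1)$. Dividing by $e=pt$ gives $\delta=1-\tfrac1{pt}+\tfrac{n(p-1)}{p}$.
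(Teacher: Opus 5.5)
Your proposal is correct and follows the paper's proof. The paper gets $t\mid j$ by choosing a local parameter on which the tame generator acts by $\zeta_t$ and comparing $v^{j+1}$-coefficients in $\rho\tau=\tau\rho$; this is exactly the computation behind the formula from Serre's Proposition~IV.9 that you cite. The paper also reads off $p\nmid n$ from the lower break $j/t=n$ of the Artin--Schreier quotient, a step you justify in more detail via Herbrand's function.

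One small slip in your quotation: Serre's proposition does not assert that $s\tau s^{-1}\tau^{-1}\in I^{(j+1)}$ for arbitrary $s\in I^{(0)}$. That holds exactly when $\theta_0(s)^j=1$ or $\tau\in I^{(j+1)}$. Since you only use the formula $\theta_j(s\tau s^{-1})=\theta_0(s)^j\theta_j(\tau)$ together with commutativity, your argument is unaffected.
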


\begin{proof}
Since the wild Sylow subgroup has order $p$, all positive ramification
groups are either $C_p$ or trivial.  Hence they have the displayed form.

Choose a local parameter $v$ at the ramified point so that a generator
$\tau$ of the tame subgroup $C_t$ satisfies
\[
        \tau(v)=\zeta_t v.
\]
If $\rho$ generates $C_p$, write
\[
        \rho(v)=v+c v^{j+1}+\text{higher terms},
        \qquad c\neq0.
\]
The inertia group is cyclic, so $\rho\tau=\tau\rho$.  Comparing the
coefficients of $v^{j+1}$ gives
\[
        \zeta_t c=\zeta_t^{j+1}c,
\]
and hence $t\mid j$.

The different exponent is
\[
        d=(pt-1)+j(p-1).
\]
Writing $j=tn$ and dividing by $pt$ gives \textup{(2.4)}.  The quotient
by $C_t$ is a degree-$p$ Artin--Schreier extension whose lower break is
$j/t=n$.  In reduced Artin--Schreier form, the lower break is the pole
order, and therefore $p\nmid n$; see, for example,
\cite[Proposition~3.7.8]{Stichtenoth2009}.
\end{proof}

For $p=2$, Lemma~\ref{lem:oddlocal} is \cite[Lemma~2.10(1)]{Timpanella2026}.

\subsection{Excluding a cyclic $p^2$ inertia group}

\begin{lemma}\label{lem:p2local}
Let $p\geq3$ and let $L/K$ be a totally ramified cyclic extension of
complete discretely valued fields of characteristic $p$ with
$\operatorname{Gal}(L/K)\cong C_{p^2}$.  Then
\[
 \frac{d(L/K)}{p^2}
 \geq
 p+\frac1p-\frac{2}{p^2}
 >3.
 \tag{2.5}
\]
\end{lemma}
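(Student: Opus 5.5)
The bound will come from the conductor--discriminant formula applied to the filtration of a cyclic $C_{p^2}$ extension, together with the classical constraint on its upper ramification breaks. Let the upper breaks be $u_1\le u_2$, where $u_1$ is the break of the subextension $M/K$ of degree $p$ and $u_2$ is the second upper break. By Hasse--Arf these are integers. Since $M$ is an Artin--Schreier extension with reduced pole order $u_1$, we have $p\nmid u_1$ and $u_1\ge1$. For cyclic $C_{p^2}$ extensions in characteristic $p$ (Artin--Schreier--Witt theory), the second break satisfies the standard inequality
\[
  u_2\ge p\,u_1,
\]
with equality excluded unless the Witt vector has a particular shape; in any case $u_2\geq pu_1$. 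I would invoke this via the Witt vector description $(f_0,f_1)$. Reducing $f_0$ gives pole order $u_1$, and the second break is $\max(pu_1,\deg_{\text{pole}} f_1)$, after reduction. This yields $u_2\ge pu_1\ge p$.

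Next I convert to lower numbering with Herbrand's function. The lower breaks are $\ell_1=u_1$ and $\ell_2=u_1+p(u_2-u_1)$. The different exponent is
\[
 d=\sum_{i\ge0}(|G_i|-1)=(p^2-1)(\ell_1+1)+(p-1)(\ell_2-\ell_1).
\]
Dividing by $p^2$ gives
\[
 \frac{d}{p^2}=\Bigl(1-\frac1{p^2}\Bigr)(u_1+1)+\frac{(p-1)(u_2-u_1)}{p}.
\]
This expression increases in both $u_1$ and $u_2-u_1$. The minimum therefore occurs at $u_1=1$, $u_2=p$, where it equals
\[
 2-\frac{2}{p^2}+\frac{(p-1)^2}{p}=p+\frac1p-\frac{2}{p^2}.
\]
This is the claimed bound. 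Finally, for $p\ge3$ we have $p+1/p-2/p^2\ge 3+1/3-2/9>3$, proving (2.5).

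The main obstacle is justifying $u_2\ge pu_1$ cleanly. I would cite the known result on upper breaks of cyclic $p$-power extensions in characteristic $p$ (for instance Schmid's computation, or the Witt-vector conductor formula). As a self-contained alternative, I would use the fact that the norm residue or upper filtration of $W_2$-extensions satisfies $u_2\geq pu_1$, because the Verschiebung-compatible structure forces the second-level pole order to be at least $p$ times the first. Here even the weaker $u_2\ge p$ suffices, together with $u_1\ge1$, provided monotonicity is applied correctly. The monotonicity check needs $u_2-u_1\ge p-1$, which holds since $u_2\ge pu_1\ge u_1+(p-1)u_1$.
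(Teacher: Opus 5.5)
Your proposal is correct and follows the paper's proof essentially step for step: the same Artin--Schreier--Witt input $u_1\ge1$, $u_2\ge pu_1$, the same Herbrand conversion $b_2=u_1+p(u_2-u_1)$, Hilbert's different formula, and minimization at $(u_1,u_2)=(1,p)$, giving $d\ge p^3+p-2$. Your side remark that $u_1\ge1$ together with $u_2\ge p$ would already suffice is also right, since $d/p^2=\frac{p-1}{p^2}u_1+\frac{p-1}{p}u_2+1-\frac1{p^2}$ is increasing in $u_1$ and in $u_2$ separately.
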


\begin{proof}
Let $u_1<u_2$ be the upper jumps.  Artin--Schreier--Witt ramification
theory gives
\[
        u_1\geq1,\qquad u_2\geq p u_1;
\]
see, for example, \cite{ElderKeating2026,Obus2012}.  The lower jumps are
\[
        b_1=u_1,\qquad b_2=u_1+p(u_2-u_1).
\]
Hilbert's different formula gives
\[
 d(L/K)
 =
 (b_1+1)(p^2-1)+(b_2-b_1)(p-1).
\]
The minimum occurs for $u_1=1$ and $u_2=p$, and yields
\[
        d(L/K)\geq p^3+p-2.
\]
Dividing by $p^2$ proves \textup{(2.5)}.
\end{proof}

\begin{corollary}\label{cor:p2inertia}
If a cyclic inertia group in odd characteristic contains $C_{p^2}$, then
its normalized different contribution is greater than $3$.
\end{corollary}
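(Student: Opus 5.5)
The plan is to reduce Corollary~\ref{cor:p2inertia} to Lemma~\ref{lem:p2local} by passing from the full inertia group to its order-$p^2$ subgroup and comparing normalized differents. Let $I$ be the cyclic inertia group at a point $P$, with $|I|=p^a t$, $(p,t)=1$, and $a\geq2$. Since $I$ is cyclic, it has a unique subgroup $H\cong C_{p^2}$. Let $K_I$ be the completed fixed field of $I$, $L$ the completed function field at $P$, and $M=L^{H}$, so that
\[
K_I\subseteq M\subseteq L .
\]
Here $L/M$ is totally ramified cyclic of degree $p^2$, $M/K_I$ is totally ramified of degree $p^{a-2}t$, and everything is in characteristic $p$. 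This is exactly the setting of Lemma~\ref{lem:p2local}.

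The first step is to apply the transitivity of the different in the tower:
\[
d(L/K_I)=d(L/M)+e(L/M)\,d(M/K_I)\geq d(L/M).
\]
Dividing by $|I|=p^2\cdot p^{a-2}t$ would give only $d(L/M)/(p^2\cdot p^{a-2}t)$. That loses a factor, so this crude bound is not quite enough, and I would instead work with lower ramification groups directly.

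Using Hilbert's formula,
\[
d_P=\sum_{i\ge0}\bigl(|I^{(i)}|-1\bigr),
\]
and Herbrand's compatibility for subgroups, namely $H^{(i)}=H\cap I^{(i)}$ in lower numbering, one gets termwise
\[
|I^{(i)}|-1\geq |H^{(i)}|-1 \quad\text{for } i\geq1 .
\]
For $i\geq1$ the groups $I^{(i)}$ are $p$-groups, and a cyclic $p$-group containing a nontrivial element of $H$'s filtration has order at least $|H^{(i)}|$. The $i=0$ term is $|I|-1$.

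The cleaner route is to bound $\delta$ from the ramification data of $H$ together with the factor coming from the tower. Write $b_1<b_2$ for the lower jumps of $H$, computed in $L/M$. The lower jumps of $I$ restricted to $H$ coincide with these. Moreover, for all $1\leq i\leq b_2$ the group $I^{(i)}$ contains an element of order $p$, and for $i\leq b_1$ it contains $H$. Hence
\[
\frac{d_P}{|I|}\geq 1-\frac1{|I|}+\frac{b_1(p^2-1)+(b_2-b_1)(p-1)}{|I|}.
\]
The tame and extra wild parts only enlarge the jumps. By an argument analogous to Lemma~\ref{lem:oddlocal}, commutation with the tame part forces $t\mid b_1$ and $t\mid b_2$, and the upper-jump inequalities of Lemma~\ref{lem:p2local} are applied after passing to $L^{C_t}$. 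Factors of $p$ in the lower jumps produced by higher $p$-parts are handled by upper numbering, since the upper jumps of the quotient $I/(\text{index }p\text{ subgroups})$ are preserved.

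The main obstacle is this normalization. One has to show that the sum $b_1(p^2-1)+(b_2-b_1)(p-1)$, measured in lower numbering of the full $I$, is at least $|I|/p^2$ times the minimal value $p^3+p-2$ from Lemma~\ref{lem:p2local}. Scaling by $t$ is immediate from $t\mid b_i$. For the extra $p$-power I would first try induction on $a$. Quotienting by the subgroup of order $p$ in the last jump reduces to a $C_{p^{a-1}}$ extension. By the argument of Lemma~\ref{lem:p2local} with $u_{k+1}\geq pu_k$, each additional level multiplies the break by at least $p$, which keeps $d_P/|I|>3$.
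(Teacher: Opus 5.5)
There is a genuine gap: the case $a\geq3$ is never proved. You say you ``would first try induction on $a$'' and assert that each additional level multiplies the break by $p$ and ``keeps $d_P/|I|>3$'', but you establish no inequality. Your truncated bound $d_P\geq |I|-1+b_1(p^2-1)+(b_2-b_1)(p-1)$ keeps only the part of the filtration seen by the subgroup $H\cong C_{p^2}$. Pushing it through would require quantitative control of all the upper jumps of $I$, which Lemma~\ref{lem:p2local} does not supply. For $a=2$ your computation does close. Passing to $L^{C_t}$ gives $b_1=tu_1\geq t$ and $b_2-b_1=pt(u_2-u_1)\geq p(p-1)t$, whence $d_P/|I|\geq 1-\frac1{p^2t}+\frac{p^2-1}{p^2}+\frac{(p-1)^2}{p}>3$.

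The missing idea is to orient the tower the other way. In $d(L/K_I)=d(L/E)+e(L/E)\,d(E/K_I)$ the ramification index multiplies the different of the \emph{lower} extension. Putting the $C_{p^2}$-layer on top, as you did with $M=L^H$, leaves $d(L/M)$ with coefficient one, which is why your first bound lost a factor. Instead, let $H'\leq I$ be the unique subgroup of \emph{index} $p^2$, i.e.\ of order $p^{a-2}t$, and put $E=L^{H'}$. Then $E/K_I$ is totally ramified with group $I/H'\cong C_{p^2}$, so Lemma~\ref{lem:p2local} applies to it, and
\[
d(L/K_I)\geq |H'|\,d(E/K_I),
\qquad
\frac{d(L/K_I)}{|I|}\geq\frac{d(E/K_I)}{p^2}>3 .
\]
This is the paper's whole proof. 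Your proposed induction step, quotienting by the subgroup of order $p$, also works at once when phrased this way. With $L'=L^{C_p}$ one has $d(L/K_I)\geq p\,d(L'/K_I)$, so the normalized different can only decrease on passing to $I/C_p$, and no analysis of breaks is needed.
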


\begin{proof}
Let $L/K$ be the corresponding totally ramified local extension and let
$I=\operatorname{Gal}(L/K)$ have order $p^a t$, where $a\geq2$ and
$(p,t)=1$.  Let $H\leq I$ be the unique subgroup of order
$p^{a-2}t$, and let $E=L^H$.  Then
$\operatorname{Gal}(E/K)\cong C_{p^2}$.  By transitivity of the different,
\[
 d(L/K)=d(L/E)+|H|\,d(E/K)\geq |H|\,d(E/K).
\]
Since $|I|=|H|p^2$, Lemma~\ref{lem:p2local} gives
\[
        \frac{d(L/K)}{|I|}
        \geq
        \frac{d(E/K)}{p^2}>3.
\]
\end{proof}

\begin{lemma}[Generation by point stabilizers]\label{lem:generate-stab}
Let a finite abelian group $A$ act on a smooth projective curve $V$, and
assume that $V/A\cong\Pone$.  Then $A$ is generated by its point
stabilizers.  In particular, if $A$ is cyclic, then for every prime
$\ell\mid |A|$ some point stabilizer contains the full Sylow
$\ell$-subgroup of $A$.
\end{lemma}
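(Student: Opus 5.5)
Let $B\leq A$ be the subgroup generated by all point stabilizers $A_P$, $P\in V$. Since $A$ is abelian, $B$ is normal. The plan is to show that the cover $W:=V/B\to V/A\cong\Pone$ is an unramified Galois cover with group $A/B$, and then to use the fact that $\Pone$ has no nontrivial connected étale covers to conclude $A/B=1$.

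For the unramified claim, let $Q\in W$ be the image of $P\in V$. The stabilizer of $Q$ in $A/B$ is the image of the stabilizer of $P$ in $A$. This is because $a\cdot Q=Q$ means $aP=bP$ for some $b\in B$, so $b^{-1}a\in A_P\subseteq B$, and hence $a\in B$. So every stabilizer of the $A/B$-action on $W$ is trivial, and in particular every inertia group is trivial, both tame and wild. Thus $W\to\Pone$ is étale of degree $|A/B|$. It is connected, since $W$ is the normalization of $K(V)^B$, a field.

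Riemann--Hurwitz with no ramification then gives
\[
2g(W)-2=|A/B|(0-2)=-2|A/B|.
\]
Since $g(W)\geq0$, this forces $|A/B|=1$; this uses only the integrality of the genus and works in any characteristic. Hence $A=B$, which proves the first assertion.

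For the cyclic case, write $A=C_m$ and let $\ell\mid m$ with $\ell^a\,\|\,m$. The subgroups of a cyclic group are totally ordered by divisibility of orders. Since the stabilizers generate $A$, the least common multiple of their orders equals $m$, exactly as in the proof of Lemma~\ref{lem:lcm}. So some stabilizer $A_P$ has order divisible by $\ell^a$. The unique subgroup of order $\ell^a$ is the Sylow $\ell$-subgroup, and it is contained in the cyclic group $A_P$.

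The only delicate point is the statement that the quotient stabilizers are trivial in characteristic $p$, where inertia can be wild. This is handled by working with set-theoretic stabilizers, that is decomposition groups, which coincide with inertia groups because $K$ is algebraically closed. The argument above uses nothing else.
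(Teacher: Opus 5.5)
Your proof is correct and follows essentially the same route as the paper: the subgroup $B$ generated by the point stabilizers gives an unramified cover $V/B\to V/A\cong\Pone$, Riemann--Hurwitz forces this cover to be trivial, and the cyclic case follows from the $\lcm$ argument (equivalently, the paper's observation that the $\ell$-subgroups of a cyclic group form a chain). There are two harmless wording slips: the subgroups of a cyclic group are not totally ordered (only its $\ell$-subgroups are), and the Riemann--Hurwitz step uses nonnegativity of $g(W)$ rather than its integrality; neither affects your argument.
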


\begin{proof}
Let $B\leq A$ be generated by all point stabilizers.  Every inertia group
maps trivially to $A/B$, so the induced cover
\[
        V/B\longrightarrow V/A\cong\Pone
\]
is unramified.  If $B\neq A$, Riemann--Hurwitz would give
\[
        2g(V/B)-2=-2[A:B]<-2,
\]
which is impossible.  Hence $B=A$.  If $A$ is cyclic, its
$\ell$-subgroups form a chain, so the subgroup generated by the point
stabilizers can contain the full Sylow $\ell$-subgroup only if one of
the stabilizers already contains it.
\end{proof}

In characteristic two, the same argument appears in
\cite[Lemma~2.6]{Timpanella2026}.

\subsection{A fiber-product lemma}

\begin{lemma}\label{lem:fiberproduct}
Assume $p\mid |G|$, $G=C_p\times C_m$, and $(p,m)=1$.  Put
\[
        S=C_p,\qquad M=C_m,\qquad Z=X/S,\qquad W=X/G.
\]
Then the Artin--Schreier cover $X\to Z$ is the pullback of an
Artin--Schreier cover of $W$.
\end{lemma}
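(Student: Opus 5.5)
Since $G=S\times M$ is abelian, $M$ acts on $Z=X/S$ with $Z/M\cong X/G=W$. Consider also the intermediate curve $Y=X/M$; it carries an action of $G/M\cong S\cong C_p$ with $Y/S=W$. The plan is to show that $K(X)$ is the compositum of $K(Z)$ and $K(Y)$ over $K(W)$, and then that $K(Y)/K(W)$ is the Artin--Schreier extension whose base change to $K(Z)$ is $K(X)/K(Z)$.

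\textbf{Step 1 (field theory).} Because $S\cap M=1$ and $SM=G$, Galois theory gives
\[
K(Z)\cap K(Y)=K(X)^{SM}=K(W),\qquad K(Z)K(Y)=K(X)^{S\cap M}=K(X).
\]
The extension $K(Y)/K(W)$ is cyclic of degree $p$ in characteristic $p$. By Artin--Schreier theory it is therefore $K(W)(y)$ with $y^p-y=f$ for some $f\in K(W)$. Then $K(X)=K(Z)(y)$, and $y$ satisfies the same equation $y^p-y=f$ over $K(Z)$. The degree stays $p$ because $[K(X):K(Z)]=|S|=p$.

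\textbf{Step 2 (pullback statement).} In the function-field sense, the cover $X\to Z$ is then exactly the normalization of $Z\times_W Y$. In other words, it is the pullback along $Z\to W$ of the Artin--Schreier cover $Y\to W$. The fiber product is irreducible since $K(Z)$ and $K(Y)$ are linearly disjoint over $K(W)$: their degrees $m$ and $p$ are coprime. Its normalization has function field $K(X)$, hence equals $X$.

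\textbf{Step 3 (equivariant normal form).} A more intrinsic argument would show directly that the Artin--Schreier generator of $K(X)/K(Z)$ can be chosen $M$-invariant. Fix a generator $\rho$ of $S$. One takes $y$ with $\rho(y)=y+1$. For $\mu\in M$, since $\mu$ commutes with $\rho$, we get $\rho(\mu y)=\mu y+1$. Hence $\mu y-y\in K(Z)$, which gives a $1$-cocycle of $M$ with values in $K(Z)$. Since $(p,m)=1$, averaging is possible: replace $y$ by
\[
y'=\tfrac1m\sum_{\mu\in M}\mu y .
\]
This is $M$-invariant and still satisfies $\rho(y')=y'+1$. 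Then $f=y'^p-y'$ is $G$-invariant, so $f\in K(W)$. This gives the same conclusion and also identifies $Y$ as $W(y')$.

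The only delicate point is ensuring the relation between the base-changed equation and the original degree, that is, the linear disjointness. This is immediate from coprimality of $p$ and $m$, or from the group-theoretic intersection in Step 1.
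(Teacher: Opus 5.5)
Your proposal is correct, and your main argument (Steps~1--2) takes a different route from the paper's.

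The paper starts from an Artin--Schreier generator $y$ of $K(X)/K(Z)$ with $\rho(y)=y+1$. It observes that $\tau\mapsto\tau(y)-y$ is a $1$-cocycle of $M$ with values in $K(Z)$, and uses $H^1(M,K(Z))=0$ (since $m$ is invertible) to replace $y$ by an $M$-invariant generator $y-h$. This gives $y^p-y\in K(Z)^M=K(W)$. Your Step~3 is exactly this argument made explicit: your average $y'=\frac1m\sum_{\mu}\mu(y)$ equals $y-h$ with $h=-\frac1m\sum_{\mu}(\mu(y)-y)\in K(Z)$, which is the coboundary trivializing the cocycle.

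Your Steps~1--2 instead pass through the intermediate field $K(X)^M$. By the Galois correspondence, $K(X)^M/K(W)$ is a $C_p$-extension, hence Artin--Schreier, and $K(X)=K(Z)\,K(X)^M$. This buys two things.
\begin{enumerate}
\item It identifies the Artin--Schreier cover of $W$ concretely as $X/M\to W$. The ``pullback'' statement then becomes literal: $X$ is the normalization of $Z\times_W(X/M)$.
\item It does not actually need $(p,m)=1$; it only needs $G=S\times M$ with $S\cap M=1$. The linear disjointness you attribute to coprime degrees also follows from $K(Z)\cap K(X)^M=K(W)$ together with one of the two extensions being Galois.
\end{enumerate}

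The paper's route, in turn, shows that an arbitrary Artin--Schreier generator of $X\to Z$ can be corrected by an element of $K(Z)$. That is the form in which the lemma is used later, when the authors write $y^p-y=f$ with $f$ a function on the rational base.
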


\begin{proof}
Choose an Artin--Schreier generator $y$ for $K(X)/K(Z)$ so that a
generator of $S$ sends $y$ to $y+1$.  For $\tau\in M$, commutativity of
$S$ and $M$ implies
\[
        \tau(y)-y\in K(Z).
\]
The map $\tau\mapsto\tau(y)-y$ is a $1$-cocycle of $M$ with values in
the additive $M$-module $K(Z)$.  Since $m$ is invertible in $K$,
\[
        H^1(M,K(Z))=0.
\]
After replacing $y$ by $y-h$ for a suitable $h\in K(Z)$, we may therefore
assume that $M$ fixes $y$.  Hence
\[
        y^p-y=f\in K(Z)^M=K(W),
\]
and therefore
\[
        K(X)=K(Z)(y),\qquad y^p-y=f\in K(W).
\]
Equivalently, $X$ is the normalization of the fiber product of
$Z\to W$ with the Artin--Schreier cover of $W$ defined by $y^p-y=f$.
This argument also applies when $p=2$.
\end{proof}

\section{The tame boundary}

Assume $p\nmid N$.

\begin{theorem}[Tame boundary classification]\label{thm:tame}
Assume that the action of $G\cong C_N\leq\Aut(X)$ is tame; equivalently,
either $\Char(K)=0$, or $\Char(K)=p>0$ and $p\nmid N$.

\begin{enumerate}[label=\textup{(\roman*)}]
\item If $N=2g$, then $X/G$ has four branch points and, up to permutation,
the ramification signature is one of
\[
\begin{split}
 &(2,2,N,N),\\
 &(2,3,3,6)\qquad (N=6),\\
 &(2,3,4,12)\qquad (N=12),\\
 &(3,3,4,4)\qquad (N=12),\\
 &(2,3,5,30)\qquad (N=30).
\end{split}
\]

\item If $N=2g-1$, then $X/G$ has four branch points, and
$(N;\text{signature})$ is either
\[
        (3;\,3,3,3,3)\qquad\text{or}\qquad(15;\,3,3,5,5).
\]
\end{enumerate}
\end{theorem}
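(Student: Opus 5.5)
By Proposition~\ref{prop:rationalquotient}, $X/G\cong\Pone$. In the tame case each branch point has $\delta_Q=1-1/e_Q$ with $e_Q\mid N$, $e_Q\geq2$. Equation \textup{(2.2)} then reads
\[
 \sum_{Q}\Bigl(1-\frac1{e_Q}\Bigr)=2+\frac{2g-2}{N}.
\]
For $N=2g$ the right-hand side is $3-2/N$; for $N=2g-1$ it is $3-1/N$. The plan is to bound the number $r$ of branch points and then solve the resulting Diophantine equation, pruning solutions with Lemma~\ref{lem:lcm}.

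First we bound $r$. Each term lies in $[1/2,1)$. With $r\geq6$ the sum is at least $3$, so $r\leq5$. With $r=5$ the sum is at least $5/2$, and equality to $3-c/N$ with $c\in\{1,2\}$ forces $\sum 1/e_i=2+c/N$; since $\sum 1/e_i\leq5/2$, one checks directly that this fails. Indeed, $\sum_{i=1}^5 1/e_i=2+c/N$ needs four indices equal to $2$, leaving $1/e_5=c/N$. Then $e_5=N$ (when $c=1$) or $e_5=N/2$ (when $c=2$), and Lemma~\ref{lem:lcm} with $e_5$ omitted forces $N=2$, which contradicts $N>2g-2\geq2$.

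With $r\leq3$ the sum is less than $3-\sum 1/e_i$, and we need $\sum 1/e_i=c/N$ with $c\leq2$. For $r=3$ this is impossible because each $1/e_i\geq1/N$ gives $\sum\geq3/N$. The case $r=2$ is excluded since Lemma~\ref{lem:lcm} makes $Y$ rational. Hence $r=4$, which gives the key equation
\[
 \frac1{e_1}+\frac1{e_2}+\frac1{e_3}+\frac1{e_4}=1+\frac{c}{N},\qquad c=2\ (N=2g),\quad c=1\ (N=2g-1).
\]

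Next we solve this equation under the constraints of Lemma~\ref{lem:lcm}: each $e_i\mid N$, and the lcm of any three indices equals $N$. Order $e_1\leq e_2\leq e_3\leq e_4$. Since $e_4\leq N$, the first three terms sum to at least $1+(c-1)/N$, which is close to $1$. This forces $(e_1,e_2,e_3)$ to be one of the small cases: $(2,2,*)$, $(2,3,\leq6)$, $(2,4,\leq4)$, $(3,3,\leq3)$, and so on, exactly as in the classical analysis of spherical and Euclidean triangle groups. The remaining unknown $e_4$ is then determined by $N$. Write $1/e_4=1+c/N-s$ with $s=1/e_1+1/e_2+1/e_3$, and require $e_4\mid N$ and the lcm conditions.

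For example, $e_1=e_2=2$ gives $1/e_3+1/e_4=c/N$. Combined with $\lcm(2,2,e_3)=N=\lcm(2,2,e_4)$, this forces $e_3=e_4=N$ when $c=2$, and nothing when $c=1$. The case $(2,3,e_3)$ yields $1/e_3+1/e_4=1/6+c/N$, and lcm conditions such as $\lcm(2,3,e_4)=N$ cut this down to $(2,3,3,6)$, $(2,3,4,12)$ and $(2,3,5,30)$. The case $(3,3,e_3,e_4)$ gives $(3,3,4,4)$ when $c=2$, and $(3,3,3,3)$ with $N=3$ or $(3,3,5,5)$ with $N=15$ when $c=1$. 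Finally, $e_1\geq4$ makes $\sum\leq1$, which is impossible.

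The main obstacle is making this finite case enumeration complete and checking the lcm-with-one-omitted condition in each branch. The constraint $\lcm(e_1,e_2,e_3)=N$ together with $e_4\mid N$ is what turns each family $(2,3,e_3,e_4)$ and similar cases into a finite check. I would organize this by fixing $(e_1,e_2)$ and bounding $e_3\leq$ a small constant via $3/e_3\geq 1+c/N-1/e_1-1/e_2$, then solving for $e_4$.
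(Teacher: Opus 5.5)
Your overall plan (rational quotient, Riemann--Hurwitz, bound $r$, reduce to $r=4$, then enumerate using Lemma~\ref{lem:lcm}) is the paper's. However, two concrete steps are wrong as written.

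\textbf{The case $r=5$.} It is false that four indices must equal $2$. With exactly three $2$'s the equation is $1/e_4+1/e_5=1/2+c/N$. For $c=2$ the signatures $(2,2,2,3,3)$, $(2,2,2,3,4)$, $(2,2,2,3,5)$, with $N=12,24,60$ respectively, all satisfy it. They are removed only by Lemma~\ref{lem:lcm}: omitting $e_5$ gives $N=\lcm(2,3)=6$, and then $e_5=2<e_4$. For $c=1$ the case is immediate: $N$ is odd, so every $e_i\geq3$ and $\sum 1/e_i\leq5/3<2$.

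\textbf{The case $r=4$.} This is the more serious error. The inequality you draw from $e_4\leq N$ points the other way: $1/e_4\geq1/N$ gives
\[
 \frac1{e_1}+\frac1{e_2}+\frac1{e_3}=1+\frac cN-\frac1{e_4}\leq1+\frac{c-1}N ,
\]
which is an upper bound. So nothing forces $(e_1,e_2,e_3)$ to be spherical or Euclidean. Your own answers $(3,3,4,4)$ and $(3,3,5,5)$ show this, since $1/3+1/3+1/4<1$ and $1/3+1/3+1/5<1$. Applied consistently, the triangle-group shortcut would discard them.

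The lower bound must come from the ordering $e_3\leq e_4$, namely $1/e_1+1/e_2+2/e_3\geq1+c/N$; this is what the paper uses. Your final sentence, with $3/e_3$, is a weaker but valid form of it. This bound yields $e_1\leq3$ and $e_2\leq5$. But for $(e_1,e_2)=(2,3)$ it gives only $e_3\leq11$, so the enumeration is genuinely longer than your sketch suggests. For example, $(2,3,7,14)$ with $N=42$ and $(2,3,8,8)$ with $N=24$ satisfy Riemann--Hurwitz. They also satisfy the condition $\lcm(2,3,e_4)=N$ that you cite. They are excluded only when $e_2$ is omitted, since $\lcm(2,7,14)=14$ and $\lcm(2,8,8)=8$. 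The branches $(2,4,\ast)$, $(2,5,\ast)$ and $(3,4,\ast)$ likewise need explicit checking rather than ``and so on''; each ends in a nonintegral $e_4$ or in $e_4<e_3$.

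With these repairs your argument becomes the paper's.
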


\begin{proof}
By Proposition~\ref{prop:rationalquotient}, $X/G$ is rational.  Let
$e_1\leq\cdots\leq e_r$ be the branch indices.

Assume first that $N=2g$.  Riemann--Hurwitz gives
\[
        \sum_{i=1}^{r}\frac1{e_i}
        =
        r-3+\frac2N.
        \tag{3.1}
\]
The case $r=3$ is impossible, since the left-hand side is at least
$3/N$ while the right-hand side is $2/N$.  Since $e_i\geq2$,
\[
        r-3+\frac2N\leq\frac r2,
\]
so $r\leq5$.

Suppose $r=5$.  Equation \textup{(3.1)} becomes
\[
        \sum_{i=1}^{5}\frac1{e_i}=2+\frac2N.
\]
If $e_3\geq3$, the left-hand side is at most
$1/2+1/2+1/3+1/3+1/3=2$, a contradiction.  Thus
$e_1=e_2=e_3=2$, and
\[
        \frac1{e_4}+\frac1{e_5}=\frac12+\frac2N.
\]
If $e_4\geq4$, the left-hand side is at most $1/2$, impossible.  If
$e_4=2$, then $e_5=N/2$, but Lemma~\ref{lem:lcm}, after omitting
$e_5$, gives $N=2$, contradicting $g\geq2$.  If $e_4=3$, omitting $e_5$ gives $N=6$, and the
last equation gives $e_5=2$, contrary to the ordering.  Hence $r=4$.

We now have
\[
 \frac1{e_1}+\frac1{e_2}+\frac1{e_3}+\frac1{e_4}
 =
 1+\frac2N.
 \tag{3.2}
\]
Thus $e_1\leq3$.

Suppose $e_1=2$.  If $e_2=2$, then
\[
        \frac1{e_3}+\frac1{e_4}=\frac2N.
\]
Since $e_3,e_4\leq N$, equality forces $e_3=e_4=N$.

Assume $e_2\geq3$.  From \textup{(3.2)} one gets $e_2\leq5$.

If $e_2=5$, then $e_3\leq6$.  For $e_3=5,6$, Lemma~\ref{lem:lcm}
gives respectively $N=10,30$, and \textup{(3.2)} gives
$e_4=10/3,5$, neither of which is admissible.

If $e_2=4$, then $e_3\leq7$.  For $e_3=4,5,6,7$, the equality
$N=\lcm(2,4,e_3)$ and \textup{(3.2)} give
\[
        e_4=2,\quad \frac{20}{3},\quad4,\quad\frac{28}{5},
\]
respectively.  None is compatible with the ordering.

Let $e_2=3$.  Then $e_3\leq11$ and, after omitting $e_4$,
\[
        N=\lcm(6,e_3).
\]
For $e_3=3,4,5$, equation \textup{(3.2)} gives respectively
\[
 (N,e_4)=(6,6),\qquad(12,12),\qquad(30,30).
\]
For $e_3=6,9$ one obtains $e_4<e_3$; for $e_3=10,11$ one obtains a
nonintegral value of $e_4$.  For $e_3=7$ one gets $(N,e_4)=(42,14)$,
but
\[
        \lcm(2,7,14)=14<42,
\]
contrary to Lemma~\ref{lem:lcm} after omitting $e_2$.  For $e_3=8$ one
gets $(N,e_4)=(24,8)$, but
\[
        \lcm(2,8,8)=8<24.
\]
This gives the three exceptional signatures beginning with $(2,3)$.

Suppose now that $e_1=3$.  If $e_2\geq5$, the left-hand side of
\textup{(3.2)} is at most $1/3+3/5<1$, impossible.  If $e_2=4$, then
\textup{(3.2)} forces $e_3=4$; Lemma~\ref{lem:lcm} gives $N=12$ and
then $e_4=3$, contrary to the ordering.  Hence $e_2=3$.  Then
$e_3\leq5$.  If $e_3=3$, Lemma~\ref{lem:lcm} gives $N=3$, impossible
because $N=2g$ is even.  If $e_3=4$, then $N=12$ and $e_4=4$.  If
$e_3=5$, then $N=15$, again impossible.  This proves (i).

Assume now that $N=2g-1$.  Then $N$ and all the $e_i$ are odd, so
$e_i\geq3$, and
\[
        \sum_{i=1}^{r}\frac1{e_i}
        =
        r-3+\frac1N.
        \tag{3.3}
\]
The case $r=3$ is impossible because $3/N>1/N$, while
$\sum 1/e_i\leq r/3$ shows that $r\leq4$.  Hence $r=4$ and
\[
 \frac1{e_1}+\frac1{e_2}+\frac1{e_3}+\frac1{e_4}
 =
 1+\frac1N.
 \tag{3.4}
\]
If $e_1\geq5$, the left-hand side is at most $4/5<1$, so $e_1=3$.
If $e_2\geq5$, it is at most $1/3+3/5<1$, so $e_2=3$.  Consequently
\[
        \frac1{e_3}+\frac1{e_4}=\frac13+\frac1N.
\]
If $e_3\geq7$, the left-hand side is at most $2/7<1/3$, so
$e_3\in\{3,5\}$.  For $e_3=3$, Lemma~\ref{lem:lcm} gives $N=3$ and
then $e_4=3$.  For $e_3=5$, it gives $N=15$ and then $e_4=5$.
This proves (ii).
\end{proof}

\subsection{Kummer representatives}

For a tame cyclic cover of degree $N$ with branch points
$0,1,\lambda,\infty$, write $a_0,a_1,a_\lambda,a_\infty\in
\mathbb Z/N\mathbb Z$ for the Kummer exponent residues.  They satisfy
\[
 a_0+a_1+a_\lambda+a_\infty\equiv0\pmod N,\qquad
 e_i=\frac{N}{\gcd(N,a_i)}.
\]
Changing the generator of $C_N$ multiplies all four residues by a unit
modulo $N$, while permuting branch points permutes the residues.

With $\lambda\in K\setminus\{0,1\}$, the following models realize all
the signatures in Theorem~\ref{thm:tame}:
\[
\begin{array}{c|c|c}
\text{signature} & g & \text{Kummer model}\\
\hline
(2,2,N,N) & N/2 &
 y^N=x(x-1)^{N/2}(x-\lambda)^{N/2}\\[2mm]
(2,3,3,6) & 3 &
 y^6=x^3(x-1)^2(x-\lambda)^2\\[2mm]
(2,3,4,12) & 6 &
 y^{12}=x^6(x-1)^4(x-\lambda)^3\\[2mm]
(3,3,4,4) & 6 &
 y^{12}=x^4(x-1)^8(x-\lambda)^3\\[2mm]
(2,3,5,30) & 15 &
 y^{30}=x^{15}(x-1)^{10}(x-\lambda)^6\\[2mm]
(3,3,3,3) & 2 &
 y^3=x(x-1)(x-\lambda)^2\\[2mm]
(3,3,5,5) & 8 &
 y^{15}=x^5(x-1)^{10}(x-\lambda)^3 .
\end{array}
\]

For completeness, we verify the exponent data.  For the family
$(2,2,N,N)$, the two residues with inertia two are both $N/2$, while
the two residues with full inertia are $u,-u$ with
$u\in(\mathbb Z/N\mathbb Z)^\times$; multiplication by $u^{-1}$ gives
the displayed model.

For the six sporadic cases the check is finite and can be made directly.
We record it, since the inertia indices alone do not determine Kummer
residues in general.

For $(2,3,3,6)$ the residue of inertia two is $3$, the residues of
inertia three are in $\{2,4\}$, and the full-inertia residue is in
$\{1,5\}$.  The zero-sum condition forces either
$(3,2,2,5)$ or $(3,4,4,1)$, and the latter is obtained from the former
by multiplication by $5$.

For $(2,3,4,12)$ the inertia-two residue is $6$.  Multiplying by a unit,
we may assume that the inertia-three residue is $4$.  The inertia-four
residue is then $3$ or $9$.  The zero-sum condition gives respectively
$(6,4,3,11)$ and $(6,4,9,5)$; multiplication of the second vector by
$7$ gives the first.

For $(3,3,4,4)$ the two inertia-three residues lie in $\{4,8\}$ and the
two inertia-four residues lie in $\{3,9\}$.  If the first two residues
were equal, their sum could not be cancelled by the sum of two elements
of $\{3,9\}$.  Hence they are $4,8$, and then the other two must be
$3,9$.  Thus the vector is $(4,8,3,9)$ up to permutation.

For $(2,3,5,30)$ the inertia-two residue is $15$.  We may first
normalize the inertia-three residue to $10$.  The units
$1,7,13,19$ preserve this residue and act transitively on
$\{6,12,18,24\}$, the possible inertia-five residues.  We may therefore
normalize the latter to $6$, after which the zero-sum condition forces
the full-inertia residue to be $29$.  This gives $(15,10,6,29)$.

For $(3,3,3,3)$ all four residues lie in $\{1,2\}$.  If $k$ of them are
$1$, their sum is congruent to $2-k$ modulo $3$, so $k=2$.  This gives
$(1,1,2,2)$ up to permutation.

Finally, for $(3,3,5,5)$ the two inertia-three residues lie in
$\{5,10\}$ and the two inertia-five residues lie in
$\{3,6,9,12\}$.  The sum of the inertia-three residues must be divisible
by $3$, so it is $0$ modulo $15$; hence these residues are $5,10$.
The other two must then also sum to zero and are either $3,12$ or
$6,9$.  Multiplication by the unit $2$ interchanges these two
possibilities.  Thus we may take $(5,10,3,12)$.

Consequently, up to multiplication by a unit and permutations among
points with the same inertia index, the six sporadic exponent vectors
are
\[
\begin{array}{c|c}
(2,3,3,6)&(3,2,2,5)\\
(2,3,4,12)&(6,4,3,11)\\
(3,3,4,4)&(4,8,3,9)\\
(2,3,5,30)&(15,10,6,29)\\
(3,3,3,3)&(1,1,2,2)\\
(3,3,5,5)&(5,10,3,12).
\end{array}
\]
These are precisely the exponent vectors of the displayed equations.
Thus every tame pair $(X,C_N)$ with one of these signatures is
$C_N$-equivariantly isomorphic to a member of the corresponding
one-parameter family.  The parameter $\lambda$ is a cross-ratio
parameter and is still subject to the finite transformations induced by
permutations of branch points; no uniqueness of $\lambda$ is asserted.

\section{Odd characteristic: the wild boundary}

Assume throughout this section that $p\geq3$ and $p\mid N$.

\subsection{The $p$-part}

\begin{proposition}\label{prop:p-part}
Write
\[
        N=p^\nu m,\qquad (p,m)=1.
\]
Then $\nu=1$.
\end{proposition}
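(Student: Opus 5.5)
Since $p\mid N$ we have $\nu\geq1$, so it is enough to rule out $\nu\geq2$; we argue by contradiction and assume $\nu\geq2$.

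\emph{Step 1.} By Proposition~\ref{prop:rationalquotient}, $X/G\cong\Pone$. Lemma~\ref{lem:generate-stab} applies to the cyclic group $A=G$ acting on $V=X$. It gives a point $P\in X$ whose stabilizer $G_P$ contains the full Sylow $p$-subgroup $C_{p^\nu}$ of $G$.

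\emph{Step 2.} The stabilizer $G_P$ is the inertia group at $P$, because $K$ is algebraically closed. It is cyclic and contains $C_{p^2}$, since $\nu\geq2$. Corollary~\ref{cor:p2inertia} therefore gives $\delta_Q>3$ for the branch point $Q$ below $P$.

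\emph{Step 3.} Every $\delta_Q$ is nonnegative, so
\[
\sum_Q\delta_Q\geq\delta_Q>3.
\]
This contradicts \textup{(2.2)}, which bounds the sum by $2+(2g-2)/N<3$. Hence $\nu=1$.

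The argument does not use the boundary values $N=2g,2g-1$; it only needs $N>2g-2$. All the substantive work is in Lemma~\ref{lem:p2local}, namely the lower bound on the different of a $C_{p^2}$ extension coming from the jump inequality $u_2\geq pu_1$. Here that lemma is quoted, so the only step needing care is Step~1: one needs a single stabilizer containing all of $C_{p^\nu}$, and not just some point with nontrivial $p$-ramification. Lemma~\ref{lem:generate-stab} supplies exactly this, because the $p$-subgroups of a cyclic group form a chain.
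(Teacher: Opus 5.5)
Your proof is correct and is essentially the paper's argument: rationality of $X/G$, then Lemma~\ref{lem:generate-stab} to obtain a single stabilizer containing the full Sylow $p$-subgroup, then Corollary~\ref{cor:p2inertia} to contradict \textup{(2.2)}. Your side remarks are also accurate: only $N>2g-2$ is used, and Step~1 works because the $p$-subgroups of a cyclic group form a chain.
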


\begin{proof}
By Proposition~\ref{prop:rationalquotient}, $X/G\cong\Pone$.
Lemma~\ref{lem:generate-stab} therefore shows that some point stabilizer
contains the full Sylow $p$-subgroup $C_{p^\nu}$ of $G$.  If $\nu\geq2$,
that stabilizer contains a subgroup $C_{p^2}$, so
Corollary~\ref{cor:p2inertia} gives a normalized different contribution
greater than $3$.  This contradicts
\[
        \sum_Q\delta_Q<3
\]
from \textup{(2.2)}.  Hence $\nu=1$.
\end{proof}

Put
\[
        S=C_p,\qquad M=C_m.
\]

\subsection{The master equation}

Let $q$ be the number of wild branch points of $X\to X/G$, and let
$s$ be the number of tame branch points.  At the $k$th wild branch point
write the inertia order as $pt_k$ and put
\[
        \bar d_k=\frac{m}{t_k},\qquad j_k=t_k n_k,
\]
where $p\nmid n_k$.  At the tame branch points let
\[
        e_i\mid m,\qquad d_i=\frac{m}{e_i}.
\]
Let
\[
        c=
        \begin{cases}
        2,&N=2g,\\
        1,&N=2g-1.
        \end{cases}
\]
Then Riemann--Hurwitz and Lemma~\ref{lem:oddlocal} give
\[
 m\left((q+s-3)p+(p-1)\sum_{k=1}^{q}n_k\right)
 =
 \sum_{k=1}^{q}\bar d_k+p\sum_{i=1}^{s}d_i-c.
 \tag{4.2}
\]

Indeed, a wild branch contributes
\[
 pm\left(1-\frac1{pt_k}+\frac{n_k(p-1)}p\right)
 =
 pm-\bar d_k+m n_k(p-1),
\]
whereas a tame branch contributes $p(m-d_i)$.  Since the total normalized
different is $3-c/(pm)$, summing these terms gives \textup{(4.2)}.

\begin{lemma}\label{lem:number-wild}
The number $q$ of wild branch points satisfies
\[
        1\leq q\leq2.
\]
\end{lemma}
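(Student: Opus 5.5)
The lower bound comes from Lemma~\ref{lem:generate-stab}; the upper bound is a direct estimate of the normalized different contributions against \textup{(2.2)}.

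\emph{Lower bound.} By Proposition~\ref{prop:rationalquotient}, $X/G\cong\Pone$. Lemma~\ref{lem:generate-stab} then shows that some point stabilizer contains the Sylow $p$-subgroup $S=C_p$ of $G$ (we know $\nu=1$ by Proposition~\ref{prop:p-part}). A point whose stabilizer contains $S$ lies over a wild branch point, so $q\geq1$.

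\emph{Upper bound.} By Lemma~\ref{lem:oddlocal}, each wild branch point with inertia order $pt_k$ and reduced pole order $n_k\geq1$ contributes
\[
  \delta_k=1-\frac1{pt_k}+\frac{n_k(p-1)}p
  \geq 1-\frac1p+\frac{p-1}p
  =2-\frac2p.
\]
Since $p\geq3$, this gives $\delta_k\geq 4/3$. If $q\geq3$, then
\[
  \sum_Q\delta_Q\geq 3\cdot\frac43=4>3,
\]
which contradicts \textup{(2.2)}. Hence $q\leq2$.

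The same conclusion can be read off from \textup{(4.2)}. Tame branch points contribute nonnegatively, and $\bar d_k\leq m$, so a large $q$ makes the left side exceed the right side. The direct $\delta$-bound is cleaner, so I will present that one.

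Neither step is a real obstacle. The only point needing care is the lower bound: one must justify that a stabilizer containing $S$ really yields a wild branch point. This is immediate, since the inertia group at that point then has order divisible by $p$.
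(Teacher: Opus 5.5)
Your proof is correct and matches the paper's, and the upper bound is the same estimate $\delta_k\geq 2-2/p\geq 4/3$ checked against \textup{(2.2)}. For the lower bound you cite Lemma~\ref{lem:generate-stab}, while the paper reruns its proof inline (if $q=0$, the subgroup $B$ generated by the inertia groups has order prime to $p$, so $X/B\to\Pone$ would be a nontrivial unramified cover), which amounts to the same argument.
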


\begin{proof}
Suppose first that $q=0$.  Let $B\leq G$ be the subgroup generated by
all inertia groups of the cover $X\to X/G$.  Since every inertia group
has order prime to $p$, and $G$ is cyclic, the order of $B$ is prime to
$p$.  Hence $B\neq G$.  By construction every inertia group is contained
in $B$, so the induced cover
\[
        X/B\longrightarrow X/G\cong\Pone
\]
is unramified and has degree $[G:B]>1$.  Riemann--Hurwitz would give
\[
        2g(X/B)-2=-2[G:B]<-2,
\]
which is impossible.  Thus $q\geq1$.

For a wild branch point, Lemma~\ref{lem:oddlocal} gives
\[
 \delta_{\rm wild}
 =
 1-\frac1{pt}+\frac{n(p-1)}p
 \geq
 2-\frac2p
 \geq\frac43.
\]
If $q\geq3$, then
\[
        \sum_Q\delta_Q\geq3\cdot\frac43=4,
\]
contrary to \textup{(2.2)}.  Therefore $q\leq2$.
\end{proof}

\subsection{Two wild branch points}

\begin{proposition}\label{prop:twowildodd}
If $q=2$, then necessarily
\[
        p=3,\qquad m=1,\qquad N=3,\qquad g=2,
\]
and $N=2g-1$.  In suitable coordinates
\[
        X:\quad y^3-y=ax+\frac{b}{x},
        \qquad ab\neq0.
        \tag{4.3}
\]
\end{proposition}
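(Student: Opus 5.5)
With $q=2$, the plan is to combine the lower bound for wild contributions from Lemma~\ref{lem:oddlocal} with the bound \textup{(2.2)}. This will force $p=3$, trivial tame parts, no tame branch points and $n_1=n_2=1$. After that, Lemma~\ref{lem:fiberproduct} gives the normal form.

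\textbf{Step 1: bounding the invariants.} Each wild branch point contributes
\[
\delta_k=1-\frac1{pt_k}+\frac{n_k(p-1)}p\ \geq\ 2-\frac1p-\frac1p ,
\]
using $n_k\ge1$. The two wild points together therefore give at least $4-2/p$, with equality only if $n_1=n_2=1$ and $t_1=t_2=1$. Since $\sum_Q\delta_Q<3$, we need $4-2/p<3$, i.e. $p<2$, which is false. That looks contradictory for every $p$, so I would recheck the bound: $1-\tfrac1{pt}\ge 1-\tfrac1p$ and $\tfrac{n(p-1)}p\ge \tfrac{p-1}p$, so the sum really is at least $2-2/p$, which gives only $4/3$ at $p=3$. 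Two wild points thus give at least $2(2-2/p)=4-4/p$, and $4-4/p<3$ forces $p<4$, so $p=3$.

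\textbf{Step 2: pinning down $p=3$.} With $p=3$ the wild contribution is $\delta=\tfrac53-\tfrac1{3t}+\tfrac23(n-1)$. If either $n_k\ge2$, the sum already exceeds $3$, so $n_1=n_2=1$. The total is then
\[
\tfrac{10}3-\tfrac13\Bigl(\tfrac1{t_1}+\tfrac1{t_2}\Bigr)+\sum_{\text{tame}}\delta_i .
\]
Requiring this to be $<3$ forces $\tfrac1{t_1}+\tfrac1{t_2}>1$, so at least one $t_k=1$. Once the wild terms are fixed, I would use the master equation \textup{(4.2)} with $p=3$ and $n_k=1$:
\[
m\bigl(3(s-1)+4\bigr)=\bar d_1+\bar d_2+3\sum d_i-c .
\]
Each $d_i\le m/2$ and each $\bar d_k\le m$, which limits $s$. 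For $s\ge1$ the left side is at least $4m$ while the right side is at most $2m+\tfrac{3sm}{2}-c$; comparing coefficients of $m$ gives $3s+1\le 2+\tfrac{3s}2$, which fails for $s\ge1$. So $s=0$ and $m=\bar d_1+\bar d_2-c$. Because $\bar d_k\mid m$ and one of them equals $m$, we get $\bar d_{k'}=c$ for the other index, and the lcm condition of Lemma~\ref{lem:generate-stab} on the $M$-parts forces $m=\lcm(t_1,t_2)$. Since $t_{k'}=m/c$ and $c\in\{1,2\}$ with $m$ odd when $c=2$ is impossible by parity considerations, I would expect to get $m=1$ and $c=1$. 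Hence $N=3=2g-1$ and $g=2$.

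This step is where I expect the real work. The bookkeeping of the cases $c=1,2$ with $\bar d_k\mid m$ must be redone carefully, in particular checking whether $m=2$ with $c=2$ (so $N=6$) survives. If it does, I would exclude it by showing the resulting $C_2$-quotient cover of degree two is ramified at only one point.

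\textbf{Step 3: the normal form.} With $m=1$, $X\to\Pone$ is an Artin--Schreier cover $y^3-y=f$ whose poles are exactly the two wild branch points, each of reduced pole order $n_k=1$. Moving these points to $0$ and $\infty$ and subtracting constants of the form $h^3-h$ gives $f=ax+b/x$ with $ab\neq0$. The genus formula $g=\tfrac{(p-1)}2\sum(n_k+1)-(p-1)=2$ confirms the result.
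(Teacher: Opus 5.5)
Your Steps 1 and 3 are fine. Once you correct $4-2/p$ to $4-4/p$, you do get $p=3$. In Step 2 you correctly reach $n_1=n_2=1$, $s=0$, some $t_k=1$, and $\bar d_{k'}=c$ for the other index. The gap is the final passage to $m=1$.

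The generation condition $\lcm(t_1,t_2)=m$ from Lemma~\ref{lem:generate-stab} only disposes of $c=2$: there $t_{k'}=m/2$, and $\lcm(1,m/2)<m$. Your parity remark is also reversed: $c=2$ means $N=3m$ is even, so $m$ is even. For $c=1$ you get $t_{k'}=m$. The generation condition then holds automatically, and $m=\bar d_1+\bar d_2-c=m+1-1$ is an identity. In fact the data $t_k=1$, $t_{k'}=m$, $n_1=n_2=1$, $s=0$ satisfy Riemann--Hurwitz for every odd $m$ prime to $3$, giving $g=(3m+1)/2$ and $N=3m=2g-1$. So no amount of numerical bookkeeping closes this step. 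Also, the case you single out ($m=2$, $c=2$) is not the one that survives; the survivor is $c=1$ with arbitrary $m$.

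What is missing is a structural input, namely the one you reserve for $m=2$, applied uniformly. Since $s=0$, the tame $C_m$-cover $X/S\to X/G\cong\Pone$ is branched only over those wild base points with $t_k>1$. You have shown that some $t_k=1$, so for $m>1$ this nontrivial cover would be branched over a single point. That is impossible by Lemma~\ref{lem:lcm}, or directly because $2h-2\le -2m+(m-1)<-2$.

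This is how the paper argues. For $m>1$, Lemma~\ref{lem:lcm} forces $t_1=t_2=m$, hence $\bar d_1=\bar d_2=1$, and \textup{(4.4)} becomes $2=m+c$, which is impossible. Therefore $m=1$, both $\bar d_k=1$, and $c=1$. With that line added, the rest of your proposal goes through.
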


\begin{proof}
Since each $n_k\geq1$, \textup{(4.2)} and the inequalities
$\bar d_k\leq m$, $d_i\leq m/2$ give
\[
 (s-1)p+(p-1)(n_1+n_2)
 <
 2+\frac{ps}{2}.
\]
As $n_1+n_2\geq2$, this is impossible for $p\geq5$.  Hence $p=3$, and
then it forces
\[
        n_1=n_2=1.
\]

With $p=3$, equation \textup{(4.2)} becomes
\[
        \bar d_1+\bar d_2+3\sum_{i=1}^{s}d_i
        =
        m(3s+1)+c.
        \tag{4.4}
\]
If $s\geq1$, the left-hand side is at most
\[
        2m+\frac{3sm}{2},
\]
which is strictly smaller than $m(3s+1)+c$.  Hence $s=0$.

If $m>1$, the tame cover $X/S\to\Pone$ is ramified only over the two
wild base points.  Lemma~\ref{lem:lcm} gives $t_1=t_2=m$, hence
$\bar d_1=\bar d_2=1$.  Equation \textup{(4.4)} then gives
\[
        2=m+c,
\]
which is impossible.  Thus $m=1$, and again $\bar d_1=\bar d_2=1$.
Equation \textup{(4.4)} gives $2=1+c$, so $c=1$.  Hence
$N=3=2g-1$ and $g=2$.

The cover is an Artin--Schreier cover of $\Pone$ with exactly two simple
poles.  Moving them to $0$ and $\infty$ and reducing the equation gives
\textup{(4.3)}.
\end{proof}

\subsection{One wild branch point}

By Lemma~\ref{lem:number-wild}, after Proposition~\ref{prop:twowildodd}
the only remaining possibility is $q=1$.  Write
\[
        d_0=\frac mt,\qquad n=n_1.
\]
Equation \textup{(4.2)} becomes
\[
 m\bigl((s-2)p+n(p-1)\bigr)
 =
 d_0+p\sum_{i=1}^{s}d_i-c.
 \tag{4.5}
\]
Since $d_0\leq m$, $d_i\leq m/2$, and $n\geq1$, equation
\textup{(4.5)} gives
\[
 m\bigl((s-1)p-1\bigr)
 \leq
 m\bigl((s-2)p+n(p-1)\bigr)
 <
 m+\frac{psm}{2}.
\]
After division by $m$ this yields
\[
        s<2+\frac4p.
        \tag{4.6}
\]
Thus $s\leq2$ for $p\geq5$, while $s\leq3$ for $p=3$.

\begin{proposition}\label{prop:s0s1}
The case $s=0$ does not occur.  If $s=1$, the only possibility is
\[
        p=3,\qquad m=2,\qquad n=2,\qquad N=6,\qquad g=3,
\]
and it belongs to the boundary $N=2g$.
\end{proposition}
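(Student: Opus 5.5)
The plan is to specialize the master equation \textup{(4.5)} to $s=0$ and $s=1$. In each case I first use the tame quotient $X/S\to X/G\cong\Pone$ to pin down the tame data $d_0,d_i$. That reduces \textup{(4.5)} to a small Diophantine equation in $m,n,p,c$, which I then settle using $p\geq3$, $p\nmid n$ (Lemma~\ref{lem:oddlocal}) and $(p,m)=1$.

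\emph{Case $s=0$.} The quotient $X/S\to\Pone$ is a cyclic tame cover of degree $m$. Its only possible branch point lies below the single wild point. By the argument in the proof of Lemma~\ref{lem:lcm}, a nontrivial tame cyclic cover of $\Pone$ cannot be branched over at most one point. Hence $m=1$, so $t=1$ and $d_0=1$. Equation \textup{(4.5)} then reads
\[
n(p-1)=2p+1-c .
\]
If $c=2$, then $p-1\mid 2p-1$, so $p-1\mid1$, which is impossible for $p\geq3$. If $c=1$, then $p-1\mid 2p$, so $p-1\mid 2$; hence $p=3$ and $n=3$. This contradicts $p\nmid n$. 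So $s=0$ does not occur.

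\emph{Case $s=1$.} If $m=1$ there are no tame branch points at all, contradicting $s=1$; so $m>1$. The cover $X/S\to\Pone$ is then branched over at most the two points (the wild one and the tame one). By Lemma~\ref{lem:lcm} it is branched over exactly two points, with both indices equal to $m$. Thus $t=e_1=m$ and $d_0=d_1=1$, and \textup{(4.5)} becomes
\[
m\bigl(n(p-1)-p\bigr)=p+1-c .
\]
If $c=1$, then $m$ divides $p$. Since $(p,m)=1$ this forces $m=1$, a contradiction. If $c=2$, then $m\bigl(n(p-1)-p\bigr)=p-1>0$, so $n\geq2$ and $n(p-1)-p\geq p-2$. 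For $p\geq5$ this gives $m(p-2)\leq p-1$, hence $m=1$, again a contradiction. For $p=3$ we get $m(2n-3)=2$. Since $2n-3$ is odd, $2n-3=1$, giving $n=2$ and $m=2$. Then $N=pm=6$ and $c=2$ gives $N=2g$, so $g=3$, as claimed.

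The only step needing care is the use of Lemma~\ref{lem:lcm} to force $t=m$ and $e_1=m$. I would make explicit that $X/S\to X/G$ is tame cyclic of degree $m$, and that its branch points are exactly the images of the wild and tame branch points with indices $t$ and $e_i$. Everything after that is elementary arithmetic.
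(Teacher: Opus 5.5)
Your proof is correct and follows essentially the same route as the paper. For $s=0$ you use the tame quotient $X/S\to\Pone$ to force $m=1$, and for $s=1$ you use Lemma~\ref{lem:lcm} to force $t=e_1=m$, $d_0=d_1=1$; you then solve the resulting specialization of \textup{(4.5)}. The only difference is the order of the arithmetic: you split on $c$ first and dispose of $c=1$ by the divisibility $m\mid p$, whereas the paper first pins down $n=2$ and then solves for $(p,m)$.
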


\begin{proof}
Suppose first that $s=0$.  The tame cover $X/S\to\Pone$ can ramify only
over the wild base point.  Hence it is trivial, so $m=1$ and $d_0=1$.
Equation \textup{(4.5)} becomes
\[
        n(p-1)-2p=1-c.
\]
For $c=2$ this would imply $p-1\mid1$, impossible.  For $c=1$ it implies
$p-1\mid2$, so $p=3$ and $n=3$, contradicting the fact that $n$ is a
reduced Artin--Schreier pole order and hence is prime to $p$.

Now let $s=1$.  If $t=1$, the tame cover $X/S\to\Pone$ would be ramified
at only one point.  Thus $t>1$, and Lemma~\ref{lem:lcm} applied to its
two branch points gives
\[
        t=e_1=m,\qquad d_0=d_1=1.
\]
Equation \textup{(4.5)} becomes
\[
        m\bigl(n(p-1)-p\bigr)=p+1-c.
        \tag{4.7}
\]
The coefficient on the left is negative for $n=1$.  If $n\geq3$, then
$m\geq2$ and
\[
 m\bigl(n(p-1)-p\bigr)
 \geq 2\bigl(3(p-1)-p\bigr)
 =2(2p-3)>p\geq p+1-c,
\]
so \textup{(4.7)} is impossible.  Thus $n=2$.

For $c=2$, equation \textup{(4.7)} is
\[
        m(p-2)=p-1,
\]
whose only solution with $m>1$ is $(p,m)=(3,2)$.  For $c=1$ one gets
$m(p-2)=p$; the only integral candidate is $(p,m)=(3,3)$, excluded by
$(p,m)=1$.
\end{proof}

The model in Proposition~\ref{prop:s0s1} follows from
Lemma~\ref{lem:fiberproduct}.  Put the two branch points of the
$C_2$-cover $X/S\to\Pone$ at $0,\infty$ and write $z=x^2$.  The
Artin--Schreier cover of the $z$-line has one reduced pole of order two
at infinity, so
\[
        y^3-y=a z^2+bz.
\]
Thus
\[
        X:\quad y^3-y=a x^4+b x^2,\qquad a\neq0.
        \tag{4.8}
\]

\begin{proposition}\label{prop:s2t1}
Assume $s=2$ and $t=1$.  Then the possibilities are:
\[
\begin{array}{c|c|c|c}
 & p&m&n\\
\hline
N=2g&3&4&1\\
N=2g-1&3&5&1\\
N=2g-1&5&3&1.
\end{array}
\tag{4.9}
\]
\end{proposition}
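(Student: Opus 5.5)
With $t=1$ the wild inertia group is exactly $S=C_p$, so $d_0=m$. The tame part is then the cover $X/S\to\Pone$, and it can ramify only over the two tame branch points. My plan is to read off $e_1,e_2$ from Lemma~\ref{lem:lcm}, substitute into \textup{(4.5)}, and solve the resulting small Diophantine equation.

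\textbf{Step 1: the tame indices.} The ramification indices of $X/S\to\Pone$ over the two tame branch points are $e_1,e_2$. If $m=1$ there would be no tame branch points, contradicting $s=2$, so $m>1$. The case $r=2$ of Lemma~\ref{lem:lcm} then gives $e_1=e_2=m$, hence $d_1=d_2=1$.

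\textbf{Step 2: the reduced equation.} Substituting $d_0=m$, $d_1=d_2=1$ and $s=2$ into \textup{(4.5)} gives
\[
 m\,n(p-1)=m+2p-c,
 \qquad\text{that is,}\qquad
 m\bigl(n(p-1)-1\bigr)=2p-c.
\]
Since $p\ge3$, the right-hand side is at least $4$. If $n\ge2$, the left-hand side is at least $2\bigl(2(p-1)-1\bigr)=4p-6$, and $4p-6>2p-c$ as soon as $2p>6-c$. This holds for every $p\ge3$, except that $c=1$, $p=3$ gives $4p-6=6>5=2p-c$ anyway. So $n\ge2$ is impossible and $n=1$.

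\textbf{Step 3: solving for $(p,m)$.} With $n=1$ the equation becomes $m(p-2)=2p-c$.
\begin{itemize}
\item For $c=2$ we need $m(p-2)=2(p-1)$, so $p-2\mid 2(p-1)-2(p-2)=2$. As $p$ is odd, $p=3$ and $m=4$.
\item For $c=1$ we need $m(p-2)=2p-1$, so $p-2\mid 3$. This gives $p=3,\ m=5$ or $p=5,\ m=3$.
\end{itemize}
In every case $(p,m)=1$ and $m>1$, so these are exactly the three rows of \textup{(4.9)}.

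The argument is short. The only point needing care is that the $r=2$ clause of Lemma~\ref{lem:lcm} applies, which requires $m>1$ and that the wild point is unramified in $X/S\to\Pone$ (true because $t=1$). After that, the divisibility step is routine.
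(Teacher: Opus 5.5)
Your proof is correct and follows the paper's argument essentially step for step: Lemma~\ref{lem:lcm} gives $e_1=e_2=m$, equation \textup{(4.5)} reduces to $m\bigl(n(p-1)-1\bigr)=2p-c$, the bound $m\geq2$ forces $n=1$, and the two divisibility cases produce exactly the three rows of \textup{(4.9)}. The ``except'' clause in your Step~2 is unnecessary, since $4p-6>2p-c$ holds for every $p\geq3$ and both values $c\in\{1,2\}$, including $p=3$, $c=1$.
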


\begin{proof}
Since $t=1$, the tame cover $X/S\to\Pone$ is unramified at the wild
base point and ramifies only at the two tame branch points.  By
Lemma~\ref{lem:lcm},
\[
        e_1=e_2=m,
\]
so
\[
        d_0=m,\qquad d_1=d_2=1.
\]
Equation \textup{(4.5)} becomes
\[
        m\bigl(n(p-1)-1\bigr)=2p-c.
        \tag{4.10}
\]
If $n\geq2$, the left-hand side is at least
$2(2p-3)>2p-c$, so $n=1$.

For $c=2$,
\[
        m(p-2)=2p-2,
\]
hence
\[
        m=2+\frac{2}{p-2},
\]
and the only odd prime solution is $(p,m)=(3,4)$.

For $c=1$,
\[
        m(p-2)=2p-1,
\]
hence
\[
        m=2+\frac{3}{p-2}.
\]
Thus $p-2$ divides $3$, and the two solutions are
$(p,m)=(3,5)$ and $(5,3)$.
\end{proof}

These three cases have the models
\[
\begin{array}{ll}
p=3,\ m=4:&
y^3-y=\displaystyle\frac{\kappa}{x^4-1},\\[2mm]
p=3,\ m=5:&
y^3-y=\displaystyle\frac{\kappa}{x^5-1},\\[2mm]
p=5,\ m=3:&
y^5-y=\displaystyle\frac{\kappa}{x^3-1},
\end{array}
\qquad \kappa\neq0.
\tag{4.11}
\]
Indeed, take $z=x^m$ for the tame two-point cover and move the wild base
point to $z=1$.

\begin{proposition}\label{prop:s2tgt1}
Assume $s=2$ and $t>1$.  Then $n=1$.  For $N=2g$ there are precisely
three possibilities:
\[
\begin{array}{c|c|c|c|c}
p&g(X/S)&m&d_0&\{d_1,d_2\}\\
\hline
3&1&4&1&\{1,2\}\\
5&1&6&1&\{2,3\}\\
3&2&10&1&\{2,5\}.
\end{array}
\tag{4.12}
\]
There is no possibility for $N=2g-1$.
\end{proposition}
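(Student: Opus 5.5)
The plan is to work entirely with the master equation \textup{(4.5)} together with the arithmetic constraints on the tame quotient $Z=X/S\to\Pone$ coming from Lemma~\ref{lem:lcm}.

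\emph{Structure of the tame quotient.} Since $s=2$ and $t>1$, the degree-$m$ tame cyclic cover $Z\to\Pone$ has exactly three branch points: the wild base point with index $t$, and the two tame points with indices $e_1,e_2$. Lemma~\ref{lem:lcm} (case $r=3$) then gives that $d_0=m/t$, $d_1$, $d_2$ are pairwise coprime divisors of $m$. Hence $d_0d_1d_2\mid m$, and each $d_i\leq m/2$ (in particular $d_0\leq m/2$ because $t>1$). Riemann--Hurwitz for $Z$ gives
\[
2g(Z)-2=m-d_0-d_1-d_2,
\]
which will produce the column $g(X/S)$ of \textup{(4.12)} at the end.

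\emph{Forcing $n=1$.} With $s=2$, equation \textup{(4.5)} reads
\[
mn(p-1)=d_0+p(d_1+d_2)-c.
\]
The bounds $d_i\leq m/2$ give $mn(p-1)<m(p+\tfrac12)$. Since $2(p-1)\geq p+\tfrac12$ for $p\geq3$, this forces $n=1$, so
\[
m(p-1)+c=d_0+p(d_1+d_2). \tag{$*$}
\]

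\emph{Reducing to a finite search.} Order $d_1\leq d_2$. Pairwise coprimality lets at most one of $d_0,d_1,d_2$ equal $m/2$ (two halves would be equal and $>1$ unless $m=2$, and $m=2$ is checked directly). Suppose all $d_i\leq m/3$. Then $(*)$ gives $m(p-1)<m(1+2p)/3$, which forces $p\leq3$; for $p=3$ it leaves only a small range. Next split according to which $d_i$ is large:
\begin{itemize}[label={}]
\item if $d_2=m/2$, then $(*)$ becomes $m(p/2-1)+c=d_0+pd_1$ with $d_0d_1\mid m/2$ and $\gcd(d_0d_1,2)=1$;
\item if $d_0=m/2$, it becomes $m(p-\tfrac32)+c=p(d_1+d_2)$;
\item the case where all $d_i\leq m/3$ is treated as above, and so on.
\end{itemize}
In each case, write $m=k\,d_0d_1d_2$ and bound $k$ and the $d_i$ from the size inequality. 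Sharpen this with the congruences obtained by reducing $(*)$ modulo each $d_i$, all of which divide $m$:
\[
c\equiv d_0+pd_2 \pmod{d_1},\qquad
c\equiv d_0+pd_1 \pmod{d_2},\qquad
c\equiv p(d_1+d_2)\pmod{d_0}.
\]
These should cut the possibilities down to $(p,m,d_0,\{d_1,d_2\})=(3,4,1,\{1,2\})$, $(5,6,1,\{2,3\})$ and $(3,10,1,\{2,5\})$, all with $c=2$. The genus formula then yields $g(Z)=1,1,2$ respectively.

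\emph{Excluding $N=2g-1$.} When $c=1$, $N=pm$ is odd, so $m$ and all $d_i$ are odd. Then $e_i=m/d_i\geq3$ forces $d_i\leq m/3$. In that regime the inequality leaves only $p=3$ with tightly constrained $d_i$, and a parity or congruence check of $(*)$ (for instance $2m+1=d_0+3(d_1+d_2)$ with odd $d_i$: the left side is odd while the right side is a sum of an odd number and an even number, so consistent parity must be examined together with the mod-$d_i$ congruences) should rule these out.

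The main obstacle is this case analysis in the reduction step: making the enumeration genuinely exhaustive, especially handling the multiplier $k=m/(d_0d_1d_2)$ and the borderline $p=3$ subcases, without missing a solution. I would organize it by first fixing which $d_i$ (if any) equals $m/2$, then bounding $p$, and only then running the congruences.
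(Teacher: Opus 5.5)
Your strategy of working only with $(*)$ and the pairwise coprime divisors $d_0,d_1,d_2$ of $m$, split by which $d_i$ (if any) equals $m/2$, is viable. It is also genuinely different from the paper, which organizes the proof by $h=g(X/S)$ via $m=p(2h-2)+(p-1)d_0+c$ and imports the elliptic-curve automorphism classification and the bounds $m\le 4h+4$, $m\le 4h+2$. But you never carry out the enumeration (``should cut the possibilities down''), and the one concrete constraint you state would lose solutions. If $d_2=m/2$, then $d_0,d_1$ are coprime to $m/2$ and $d_0d_1d_2\mid m$, so $d_0d_1\mid 2$, i.e.\ $(d_0,d_1)\in\{(1,1),(2,1),(1,2)\}$. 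It is not true that $d_0d_1\mid m/2$ and $\gcd(d_0d_1,2)=1$. Your condition forbids $d_1=2$, which is exactly where $(5,6,1,\{2,3\})$ and $(3,10,1,\{2,5\})$ sit: both have $d_2=m/2$ and $d_1=2$. Taken literally, your case split yields only $(3,4,1,\{1,2\})$. With the correct constraint, and $c=2$ since $m$ is even, $(*)$ becomes $m(p-2)=2(d_0+pd_1-2)$. The three choices give $m=2+2/(p-2)$, $m=2+4/(p-2)$ and $m=4+6/(p-2)$, hence respectively $(p,m)=(3,4)$; nothing (only $(3,6)$, which is not coprime); and $(3,10)$, $(5,6)$. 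The case $d_0=m/2$ forces $d_1,d_2\in\{1,2\}$ and $m(2p-3)\le 6p-4$, and is quickly seen to be empty.

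The remaining case, where no $d_i$ equals $m/2$, also has a gap. For $p=3$ your inequality reads $2m<7m/3$, which holds for every $m$, so it does not leave ``a small range''. The missing observation is that $3\nmid m$, so none of $t,e_1,e_2$ can equal $3$. Hence all are at least $4$, so $d_i\le m/4$ and $d_0+3(d_1+d_2)\le 7m/4<2m+c$, and this case is empty; for $p\ge 5$ it is already empty by your inequality. The same observation settles $N=2g-1$, where your parity check is, as you noticed, inconclusive: $c=1$ forces $m$ odd, so no $d_i$ equals $m/2$ and you are in the empty case. With these repairs your argument is complete and uses only Lemma~\ref{lem:lcm} and $(m,p)=1$. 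It shows that the geometric inputs of the paper's proof are not needed for this proposition; the column $g(X/S)$ is then recovered afterwards from $2g(Z)-2=m-d_0-d_1-d_2$, rather than serving as the organizing parameter as in the paper.
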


\begin{proof}
Let
\[
        h=g(X/S).
\]
The tame cover $X/S\to\Pone$ has three branch points of indices
$t,e_1,e_2$.  In particular $d_0,d_1,d_2$ are pairwise coprime by
Lemma~\ref{lem:lcm}.

Since $t,e_1,e_2\geq2$, one has
$d_0,d_1,d_2\leq m/2$.  Equation \textup{(4.5)} therefore gives
\[
        n(p-1)<p+\frac12.
\]
For $p\geq3$ this forces $n=1$.

Riemann--Hurwitz for the tame $C_m$-cover gives
\[
        2h-2=m-d_0-d_1-d_2.
        \tag{4.13}
\]
Using \textup{(4.5)} with $n=1$ to eliminate $d_1+d_2$ yields
\[
        m=p(2h-2)+(p-1)d_0+c.
        \tag{4.14}
\]

If $h=0$, a nontrivial tame cyclic group acting on a rational curve has
exactly two branch points, whereas the present cover has three.  Hence
$h\neq0$.

Let $h=1$, and let $\alpha$ generate the $C_m$-action on the elliptic
curve $X/S$.  Write
\[
        \alpha=\tau_Q\circ\beta,
\]
where $\tau_Q$ is translation by $Q$ and $\beta$ fixes the origin.  Since
the action is not semiregular, $\beta\neq1$: otherwise $\alpha$ would be
a nontrivial translation and the cyclic action would be fixed-point free.
The endomorphism $1-\beta$ is then a nonzero isogeny and hence is
surjective.  Thus there exists $P$ with $(1-\beta)(P)=Q$, equivalently
$\alpha(P)=P$.  Taking $P$ as the origin, $\alpha$ becomes an automorphism
of the elliptic curve fixing the origin.  Since $(m,p)=1$, the standard
classification gives
\[
        m\in\{2,3,4,6\};
\]
see \cite[Theorem~III.10.1]{Silverman2009}.
Equation \textup{(4.14)} becomes
\[
        m=(p-1)d_0+c.
\]
For $c=2$, the only solutions with $(p,m)=1$ and $d_0\mid m$ are
\[
        (p,m,d_0)=(3,4,1),\qquad(5,6,1).
\]
Equation \textup{(4.13)} then gives
\[
        d_1+d_2=3,\qquad d_1+d_2=5,
\]
respectively, and the divisor conditions give
\[
        \{d_1,d_2\}=\{1,2\},\qquad\{2,3\}.
\]
For $c=1$ there is no solution.

Assume finally $h\geq2$.  Since $C_m$ is abelian,
\[
        m\leq4h+4
\]
by \cite[Theorem~11.79]{HKT2008}.
From \textup{(4.14)} and $d_0\geq1$, this is impossible for $p\geq5$.
Hence $p=3$, and
\[
        m=6h-6+2d_0+c.
        \tag{4.15}
\]
If $c=2$, the abelian bound gives
\[
        h+d_0\leq4.
\]
The possibilities with $h\geq2$ are
\[
 (h,d_0)=(2,1),(2,2),(3,1).
\]
Equation \textup{(4.15)} gives $m=10,12,16$, respectively.  The middle
case is excluded by $(m,3)=1$.  In the last case $d_0=1$, so $t=m$ and
$C_m$ fixes a point of the genus-three curve $X/S$.  Since
$p\nmid m$, the point-fixing bound
\[
        m\leq4h+2
\]
from \cite[Theorem~11.60]{HKT2008} gives $16\leq14$, a contradiction.  Thus
$(h,m,d_0)=(2,10,1)$.  Equation \textup{(4.13)} gives
$d_1+d_2=7$, and the proper divisors of $10$ force
$\{d_1,d_2\}=\{2,5\}$.

If $c=1$, equation \textup{(4.15)} becomes
\[
        m=6h-5+2d_0.
\]
Together with $m\leq4h+4$ this gives
\[
        h+d_0\leq4.
\]
Since $h\geq2$ and $d_0\geq1$, the only possibilities are
\[
        (h,d_0)=(2,1),(2,2),(3,1),
\]
and they give respectively
\[
        m=9,11,15.
\]
The first and third are divisible by $3$, contrary to $(m,3)=1$, while
for $m=11$ one would have $d_0=2$, impossible because $d_0=m/t$ is a
divisor of $m$.  Hence no case remains.
\end{proof}

The three tame covers in \textup{(4.12)} may be written as
\[
\begin{array}{ll}
m=4:& w^4=z(z-1)^2,\\
m=6:& w^6=z^2(z-1)^3,\\
m=10:& w^{10}=z^2(z-1)^5.
\end{array}
\tag{4.16}
\]
We verify the Kummer data explicitly.  For $m=4$ the inertia indices are
$4,2,4$.  The inertia-two residue is $2$, while the two full-inertia
residues lie in $\{1,3\}$.  The zero-sum condition forces them to be
equal, and multiplication by $3$ identifies the two possibilities; thus
we may take $(1,2,1)$.  For $m=6$ the indices are $3,2,6$.
Normalize the full-inertia residue to $1$.  The inertia-two residue is
$3$, and the zero-sum condition then forces the inertia-three residue to
be $2$, giving $(2,3,1)$.  For $m=10$ the indices are $5,2,10$.
After normalizing the full-inertia residue to $1$, the inertia-two
residue is $5$ and the zero-sum condition forces the inertia-five
residue to be $4$.  Multiplying $(4,5,1)$ by $3$ gives the equivalent
vector $(2,5,3)$ appearing in \textup{(4.16)}.  Hence the displayed
three equations are, up to units and permutations of branch points, the
unique Kummer normal forms with the required inertia data.  In each
equation the point at infinity has full inertia $m$.

By Lemma~\ref{lem:fiberproduct}, the Artin--Schreier equation is
\[
        y^p-y=a z,\qquad a\neq0.
        \tag{4.17}
\]

\begin{proposition}\label{prop:s3}
The case $s=3$ does not occur.
\end{proposition}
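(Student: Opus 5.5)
The plan is to run the same bookkeeping as in Propositions~\ref{prop:s0s1}--\ref{prop:s2tgt1}, using equation \textup{(4.5)} with $s=3$. By \textup{(4.6)}, $s=3$ can only occur for $p=3$, so \textup{(4.5)} becomes
\[
        m(3+2n)=d_0+3(d_1+d_2+d_3)-c .
\]
Using $d_0\le m$ and $d_i\le m/2$, the right-hand side is less than $m+\tfrac92 m$. The left-hand side is at least $5m$, with equality only when $n=1$, and $n\geq2$ would make it at least $7m$. Hence $n=1$, and we are left with
\[
        d_0+3(d_1+d_2+d_3)=5m+c .
\]
I would then split according to whether the wild base point is ramified in the tame cover $X/S\to\Pone$, that is, according to $t>1$ or $t=1$.

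\emph{Case $t>1$.} Then $d_0\le m/2$, so $3\sum d_i\ge \tfrac92 m+c$. This forces $\sum d_i\ge \tfrac32 m+c/3>\tfrac32 m$, which contradicts $d_i\le m/2$ for $i=1,2,3$.

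\emph{Case $t=1$.} Then $d_0=m$, so $3(d_1+d_2+d_3)=4m+c$. The tame cover $X/S\to\Pone$ now has exactly three branch points, namely the tame ones, so Lemma~\ref{lem:lcm} shows that $d_1,d_2,d_3$ are pairwise coprime. Since they are pairwise coprime divisors of $m$, their product divides $m$. Each $d_i$ is also a proper divisor, so $d_i\le m/2$. Under these constraints the sum $d_1+d_2+d_3$ is maximized when one $d_i$ is as large as possible and the others are tiny. This gives a bound of the form $d_1+d_2+d_3\le m/2+O(1)$; concretely, $d_1+d_2+d_3\le m/2+3$ should suffice. Comparing this with $d_1+d_2+d_3=(4m+c)/3>4m/3$ leaves only $m\le 3$. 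Since $(m,3)=1$ and $m>1$ (a nontrivial tame cover is needed to have three branch points), we get $m=2$ and $d_1=d_2=d_3=1$. The equation then reads $9=8+c$, so $c=1$. But $c=1$ means $N=2g-1$ is odd, whereas $N=pm=6$; this is a contradiction.

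The only step needing genuine care is the bound on the sum of three pairwise coprime proper divisors of $m$ whose product divides $m$. I would prove it by assuming $d_1\ge d_2\ge d_3$ and noting two facts. First, $d_2d_3\le m/d_1$. Second, if $d_1=m/2$ then $d_2,d_3$ are coprime to $m/2$ and divide $m$, so they lie in $\{1,2\}$. If instead $d_1\le m/3$, the crude bound $d_2+d_3\le 1+m/(3\cdot d_1)\cdot d_1$ is not good enough, so I would use $d_2+d_3\le d_2d_3+1\le m/d_1+1$. This gives
\[
        d_1+d_2+d_3\le d_1+\frac{m}{d_1}+1,
\]
which for $2\le d_1\le m/3$, or for $d_1=1$, is at most $m/2+3$ once $m$ is moderately large. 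The finitely many small $m$ are then checked by hand.
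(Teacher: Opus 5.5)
Your proof is correct, and it takes a different route from the paper's.

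\textbf{The paper's route.} The paper does not split on $t$. It sets $h=g(X/S)$ and combines Riemann--Hurwitz for $X/S\to\Pone$ with \textup{(4.18)} to get $m=6h-6+2d_0+c$. It then argues case by case in $h$:
\begin{enumerate}[label=\textup{(\alph*)}]
\item $h=0$ is excluded because a tame cyclic action on $\Pone$ has only two branch points.
\item $h=1$ uses a fixed point of the generator on the elliptic curve and the classification of its automorphisms, giving $m\in\{2,4\}$.
\item $h\geq2$ uses the bounds $m\leq4h+4$ for abelian groups and $m\leq4h+2$ for point-fixing groups from \cite{HKT2008}.
\end{enumerate}

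\textbf{What your split buys.} Splitting on $t$ shows this machinery is not needed here. For $t>1$, the bounds $d_0,d_i\leq m/2$ give $d_0+3D-c\leq 5m-c<5m$, so \textup{(4.5)} fails outright. For $t=1$, you use only the pairwise coprimality from Lemma~\ref{lem:lcm} and elementary divisor counting. Your argument is therefore self-contained and purely arithmetic. The paper's version has the merit of running parallel to Proposition~\ref{prop:s2tgt1}, where the genus analysis is genuinely needed because solutions survive there.

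\textbf{Two small simplifications of your $t=1$ step.}
\begin{enumerate}[label=\textup{(\roman*)}]
\item Your bound $D\leq m/2+3$ holds for every $m$, so no hand check of small $m$ is needed. The range $2\leq d_1\leq m/3$ is nonempty only when $m\geq6$. On that range, convexity of $d_1\mapsto d_1+m/d_1+1$ gives at most $\max\{m/2+3,\,m/3+4\}=m/2+3$.
\item You can close $t=1$ even faster by Riemann--Hurwitz for $X/S\to\Pone$. It gives $2h-2=m-D=-(m+c)/3<0$, hence $h=0$, which is impossible with three tame branch points.
\end{enumerate}
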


\begin{proof}
By \textup{(4.6)}, this can occur only for $p=3$.  Equation
\textup{(4.5)} reads
\[
        m(2n+3)=d_0+3(d_1+d_2+d_3)-c.
        \tag{4.18}
\]
Since $d_0\leq m$ and $d_i\leq m/2$, the right-hand side is smaller than
$11m/2$.  Hence $2n+3<11/2$, so $n=1$.

Put $D=d_1+d_2+d_3$ and $h=g(X/S)$.  Riemann--Hurwitz gives
\[
        2h-2=2m-d_0-D.
\]
Using \textup{(4.18)} with $n=1$ gives again
\[
        m=6h-6+2d_0+c.
        \tag{4.19}
\]

If $h=0$, a tame cyclic group on a rational curve cannot have the three
tame branch points already present here.  If $h=1$, apply the preceding
elliptic-curve argument to a generator of $C_m$: writing it as a
translation followed by an automorphism fixing the origin shows that the
generator itself has a fixed point.  Hence
$m\in\{2,3,4,6\}$ by
\cite[Theorem~III.10.1]{Silverman2009}.  Since $(m,3)=1$, only $m=2,4$ remain.  For $c=2$,
\textup{(4.19)} leaves $m=4,d_0=1$, but then
\[
        D=2m-d_0=7,
\]
whereas three proper divisors of $4$ have sum at most $6$.  For $c=1$
there is no integral $d_0$.

Let $h\geq2$.  This argument also includes the subcase $t=1$, for
which $d_0=m$.  Since $C_m$ is abelian, the bound $m\leq4h+4$ together
with \textup{(4.19)} gives a short finite list.

If $c=1$, then
\[
        m=6h-5+2d_0,
\]
so $h+d_0\leq4$.  Thus
\[
        (h,d_0)=(2,1),(2,2),(3,1),
\]
which give $m=9,11,15$, respectively.  The values $9$ and $15$ are
excluded by $(m,3)=1$, while $m=11$ would require the divisor
$d_0=2$ of $11$, impossible.  Hence $c=1$ gives no solution.

If $c=2$, then
\[
        m=6h-4+2d_0,
\]
and again $h+d_0\leq4$.  The same three pairs give
$m=10,12,16$.  The value $12$ is excluded by $(m,3)=1$.  For
$m=16$ one has $(h,d_0)=(3,1)$, so $t=m$ and $C_m$ fixes a point of the
genus-three curve $X/S$; the point-fixing bound
$m\leq4h+2$ gives $16\leq14$, a contradiction.  Hence only
$(h,m,d_0)=(2,10,1)$ remains.  But then
\[
        D=2m-d_0-(2h-2)=17,
\]
whereas each $d_i$ is a proper divisor of $10$ and hence at most $5$.
Thus $D\leq15$, a contradiction.
\end{proof}

Combining the preceding propositions gives the odd-characteristic
classification.

\begin{theorem}[Odd-characteristic wild boundary]\label{thm:oddwild}
Assume $p\geq3$ and $p\mid N$, with
$N\in\{2g,2g-1\}$.  Then, up to coordinate changes and
Artin--Schreier transformations, the possibilities are exactly:
\[
\begin{array}{c c c l}
\toprule
p & N & g & \text{model}\\
\midrule
3 & 3 & 2 &
 y^3-y=ax+b/x,\quad ab\neq0,\\[1mm]
3 & 6 & 3 &
 y^3-y=ax^4+bx^2,\quad a\neq0,\\[1mm]
3 & 12 & 6 &
 y^3-y=\kappa/(x^4-1),\quad \kappa\neq0,\\[1mm]
3 & 12 & 6 &
 w^4=z(z-1)^2,\quad y^3-y=a z,\quad a\neq0,\\[1mm]
5 & 30 & 15 &
 w^6=z^2(z-1)^3,\quad y^5-y=a z,\quad a\neq0,\\[1mm]
3 & 30 & 15 &
 w^{10}=z^2(z-1)^5,\quad y^3-y=a z,\quad a\neq0,\\[1mm]
3 & 15 & 8 &
 y^3-y=\kappa/(x^5-1),\quad \kappa\neq0,\\[1mm]
5 & 15 & 8 &
 y^5-y=\kappa/(x^3-1),\quad \kappa\neq0.
\\
\bottomrule
\end{array}
\tag{4.20}
\]
The first and last two rows belong to $N=2g-1$; the remaining rows belong
to $N=2g$.
\end{theorem}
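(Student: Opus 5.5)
The plan is to assemble the case analysis already carried out in this section, and then to check that every surviving numerical case has the stated model and genus. First, Proposition~\ref{prop:p-part} reduces to $N=pm$ with $(p,m)=1$, so $G=S\times M$ with $S=C_p$ and $M=C_m$. By Proposition~\ref{prop:rationalquotient}, $X/G\cong\Pone$, and the master equation (4.2) holds. Lemma~\ref{lem:number-wild} gives $q\in\{1,2\}$.

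For $q=2$, Proposition~\ref{prop:twowildodd} yields only the first row, with $N=3$, $g=2$. For $q=1$, inequality (4.6) bounds $s\le3$ and allows $s=3$ only when $p=3$. The subcases are then handled in turn:
\begin{enumerate}[label=\textup{(\alph*)}]
\item $s=0$ is excluded and $s=1$ gives $(p,m)=(3,2)$, by Proposition~\ref{prop:s0s1};
\item $s=2$ with $t=1$ gives the three cases (4.9), by Proposition~\ref{prop:s2t1};
\item $s=2$ with $t>1$ gives the three cases (4.12), all with $N=2g$, by Proposition~\ref{prop:s2tgt1};
\item $s=3$ is excluded by Proposition~\ref{prop:s3}.
\end{enumerate}
This gives exactly eight numerical cases, matching the eight rows of (4.20). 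In each case the genus comes directly from $N=2g$ or $N=2g-1$: $N=6$ gives $g=3$, $N=12$ gives $g=6$, $N=15$ gives $g=8$, and $N=30$ gives $g=15$.

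Next come the models. In each case I will apply Lemma~\ref{lem:fiberproduct} to write $K(X)=K(Z)(y)$ with $y^p-y=f\in K(W)=K(z)$, where $Z=X/S$ is a tame $C_m$-cover of $W\cong\Pone$. The tame cover $Z\to W$ is a Kummer cover. When it has two branch points (the cases $s=1$, and $s=2$ with $t=1$), Lemma~\ref{lem:lcm} makes it $x^m=z$ with branch points at $0$ and $\infty$. When $s=2$ and $t>1$, it is one of the normal forms (4.16), which were verified just above. The function $f$ is then fixed by the ramification data. It must have poles only at the wild base point, and the reduced pole order there, measured in $z$, is $n$ by Lemma~\ref{lem:oddlocal}. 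Placing the wild point and using Artin--Schreier reductions gives the stated forms:
\begin{enumerate}[label=\textup{(\alph*)}]
\item for $s=1$, with the wild point at $\infty$, $f=az^2+bz$, since $n=2$;
\item for $s=2$ with $t=1$, with the wild point at $z=1$ and $n=1$, $f=\kappa/(z-1)$ plus a constant, which can be absorbed;
\item for $s=2$ with $t>1$, with the wild point at $z=\infty$ (full inertia at $\infty$ in (4.16)) and $n=1$, $f=az$.
\end{enumerate}

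Finally, I will check the converse: each model in (4.20) actually carries a $C_N$-action with the stated genus. The action is $y\mapsto y+1$ combined with the Kummer automorphism. These commute because $f$ lies in $K(z)$, so the group is cyclic of order $pm$. A Riemann--Hurwitz computation then reproduces the genus, and this is just (4.2) read backwards.

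The main obstacle I expect is the normal-form step. I need to be sure that fixing the tame cover's coordinates, which is constrained by the Kummer normalization, still leaves enough freedom to put $f$ in the claimed shape. For instance, in case (c) a translation of $z$ is not available, so I must check that $f=az+b$ reduces to $az$ by absorbing the constant $b$ through $y\mapsto y+\beta$ with $\beta^p-\beta=b$. In case (a) I would similarly remove any constant term.
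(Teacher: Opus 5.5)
Your proposal is correct and follows essentially the same route as the paper: completeness is assembled from Propositions~\ref{prop:twowildodd}--\ref{prop:s3}, and the models come from Lemma~\ref{lem:fiberproduct} combined with the Kummer normal forms and the reduced pole order $n$ at the wild base point. The converse uses the coprime-degree compositum and Riemann--Hurwitz; the paper checks the genera via Riemann--Hurwitz for $X\to X/S$ instead of reading \textup{(4.2)} backwards, which is equivalent.
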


\begin{proof}
Completeness is the content of Propositions
\ref{prop:twowildodd}--\ref{prop:s3}.  It remains only to verify the
converse.

All Kummer degrees in \textup{(4.20)} are prime to $p$, and all
Artin--Schreier equations have reduced pole orders.  The two extensions
over the common rational base have coprime degrees, so their compositum
has Galois group
\[
        C_m\times C_p\cong C_{pm}.
\]
The genus follows directly from Riemann--Hurwitz.

For example, in the three fiber-product cases the tame covering curves
have genera $1,1,2$, respectively.  The Artin--Schreier function has one
pole on the tame covering curve, of orders $4,6,10$, respectively.
Thus
\[
\begin{array}{c|c}
(p,m)&2g-2\\
\hline
(3,4)&3(0)+2(4+1)=10,\\
(5,6)&5(0)+4(6+1)=28,\\
(3,10)&3(2)+2(10+1)=28.
\end{array}
\]
Hence the genera are $6,15,15$.  The remaining rows are checked in the
same way:
two simple poles for $p=3$ give genus $2$; one pole of order four gives
genus $3$; four simple poles in characteristic $3$ give genus $6$; five
simple poles in characteristic $3$ and three simple poles in
characteristic $5$ both give genus $8$.
\end{proof}

\begin{remark}
For every pair $(X,G)$ occurring in Theorem~\ref{thm:oddwild}, the
preceding reductions give a $G$-equivariant model belonging to one of the
displayed families, after the stated coordinate changes on the rational
base and changes of Artin--Schreier or Kummer generator.  This is the
sense in which the list is complete.  No claim is made that distinct
values of the displayed parameters define nonisomorphic pairs.
\end{remark}

\section{Characteristic two}

Assume throughout this section that $\Char(K)=2$.
The ramification estimates and the numerical analysis in this section are
closely related to those used by Timpanella in \cite[Section~3]{Timpanella2026}
for the range $N\geq 2g+1$.  At the boundary value $N=2g$, several of the
inequalities occurring there become equalities and lead to the cases below.
For completeness, we keep the short arguments needed here and indicate the
corresponding results in \cite{Timpanella2026}.  The Artin--Schreier--Witt
normal forms in Theorems~\ref{thm:C4normal} and \ref{thm:C12normal} are not
part of that classification, since \cite{Timpanella2026} proves that
$4\nmid N$ in the range $N\geq2g+1$.

If $N=2g-1$, then $N$ is odd and the action is tame.  Thus
Theorem~\ref{thm:tame} gives exactly
\[
        (N,g)=(3,2),\qquad(3,3,3,3),
\]
and
\[
        (N,g)=(15,8),\qquad(3,3,5,5).
\]
We therefore assume $N=2g$ and write
\[
        N=2^a m,\qquad m\ \text{odd}.
\]

Lemma~\ref{lem:oddlocal}, applied with $p=2$, shows that if a cyclic
stabilizer has order $2t$, with $t$ odd, then
\[
        \delta\geq\frac32-\frac1{2t}.
        \tag{5.1}
\]
We shall also need the following two elementary consequences of
Artin--Schreier--Witt ramification theory; compare
\cite[Lemmas~2.9 and~2.10]{Timpanella2026}.

\begin{lemma}\label{lem:char2local}
Let $I$ be a cyclic inertia group in characteristic two.
\begin{enumerate}[label=\textup{(\roman*)}]
\item If $|I|=4t$, with $t$ odd, then
\[
        \delta\geq\frac94-\frac1{4t}.
        \tag{5.2}
\]
More precisely, if $j_1<j_2$ are the positive lower breaks, then
$j_1\geq t$ and $j_2\geq3j_1$.
\item If $8\mid |I|$, then $\delta\geq7/2$.
\end{enumerate}
\end{lemma}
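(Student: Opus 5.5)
We follow the template of Lemma~\ref{lem:p2local} and Corollary~\ref{cor:p2inertia}: compute the different with Hilbert's formula, and bound the lower breaks using the Hasse--Arf theorem and the standard Artin--Schreier--Witt bounds on the upper jumps. Write $I=P\times T$ with $P$ the Sylow $2$-subgroup and $T\cong C_t$ tame. Then $I^{(0)}=I$ and $I^{(1)}=P$, and the higher groups form a filtration of the cyclic $2$-group $P$.

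For (i), $P\cong C_4$. Let $j_1<j_2$ be the lower breaks, so that $I^{(i)}=C_4$ for $1\le i\le j_1$ and $I^{(i)}=C_2$ for $j_1<i\le j_2$. Hilbert's formula gives
\[
 d=(4t-1)+3j_1+(j_2-j_1).
\]
To get $t\mid j_1$ we argue exactly as in Lemma~\ref{lem:oddlocal}. Take a parameter $v$ with $\tau(v)=\zeta_t v$. A generator $\rho$ of $C_4$ has $\rho(v)=v+cv^{j_1+1}+\cdots$ with $c\neq0$, and commuting with $\tau$ forces $t\mid j_1$, so $j_1\ge t$. The same argument applied to $\rho^2$, whose break is $j_2$, gives $t\mid j_2$.

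For the second break we pass to upper numbering over the tame subextension, where the $C_4$-extension has lower breaks $j_1/t$ and $j_2/t$. There the upper jumps satisfy $u_1\ge1$ and $u_2\ge 2u_1$; this is the Artin--Schreier--Witt inequality cited for Lemma~\ref{lem:p2local}, specialised to $p=2$. Herbrand's function gives $b_2=u_1+2(u_2-u_1)\ge 3u_1=3b_1$, and since lower breaks scale by $t$, we get $j_2\ge 3j_1$. Substituting $j_1\ge t$ and $j_2\ge3t$:
\[
 d\ge 4t-1+3t+2t=9t-1,
\]
and dividing by $4t$ gives (5.2).

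For (ii), let $8\mid|I|$ with $|I|=2^a t$. As in Corollary~\ref{cor:p2inertia}, take $H\le I$ of order $2^{a-3}t$ and $E=L^H$, so that $\operatorname{Gal}(E/K)\cong C_8$. Transitivity of the different gives $d(L/K)/|I|\ge d(E/K)/8$, so it suffices to treat a totally ramified $C_8$-extension. Its upper jumps satisfy $u_1\ge1$, $u_2\ge2u_1$ and $u_3\ge2u_2$, with minimal values $1,2,4$. These give lower breaks
\[
 b_1=1,\qquad b_2=1+2\cdot1=3,\qquad b_3=3+4\cdot2=11.
\]
Hilbert's formula then gives
\[
 d\ge 7(b_1+1)+3(b_2-b_1)+(b_3-b_2)=14+6+8=28,
\]
hence $\delta\ge 28/8=7/2$. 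We should also check that $d$ is monotone increasing in the jumps, so that the minimum really is attained at the minimal jumps.

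The main obstacle is justifying the upper-jump inequality $u_{i+1}\ge 2u_i$ in characteristic two, together with its compatibility with the tame part. The cleanest route is to quote it from the references used for Lemma~\ref{lem:p2local} and from Timpanella's Lemmas~2.9--2.10, noting that for $p=2$ the inequality is $u_{i+1}\ge 2u_i$ and not a strict one.
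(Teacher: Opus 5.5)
Your proposal is correct and is essentially the paper's argument: the commutation trick for $t\mid j_1$, the $C_4$ upper-jump inequality $u_2\ge 2u_1$ carried through Herbrand's function to give $j_2\ge 3j_1$, Hilbert's formula giving $d\ge 9t-1$, and for (ii) the reduction to a $C_8$-quotient by transitivity of the different, with minimal upper jumps $(1,2,4)$. The monotonicity you flag is immediate, since in terms of the upper jumps the $C_8$ different exponent is $d=7+u_1+2u_2+4u_3$; your extra observation $t\mid j_2$ is true but not needed.
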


\begin{proof}
Assume first that $|I|=4t$.  For positive lower indices, the ramification
groups have orders four and then two.  The same local-parameter
commutation argument as in Lemma~\ref{lem:oddlocal} gives $t\mid j_1$,
hence $j_1\geq t$.  After quotienting by the tame subgroup of order
$t$, Herbrand's function gives the upper breaks
\[
        u_1=\frac{j_1}{t},\qquad
        u_2=\frac{j_1+j_2}{2t}.
\]
For a cyclic $C_4$-extension one has $u_2\geq2u_1$; see, for example,
\cite{Schmid1937,ElderKeating2026}.  Hence $j_2\geq3j_1$.  Hilbert's different
formula now gives
\[
 d=(4t-1)+3j_1+(j_2-j_1)
   =4t-1+2j_1+j_2
   \geq9t-1,
\]
and division by $4t$ proves \textup{(5.2)}.

Suppose next that $8\mid |I|$.  Let $H\leq I$ be the subgroup of
index eight and let $E=L^H$.  Transitivity of the different gives
\[
 \frac{d(L/K)}{|I|}
 \geq \frac{d(E/K)}8.
\]
Thus it is enough to treat the cyclic degree-eight quotient.  For a
cyclic $C_8$-extension the three upper
breaks satisfy
\[
        u_1\geq1,\qquad u_2\geq2u_1,\qquad u_3\geq2u_2;
\]
see \cite{Schmid1937,ElderKeating2026}.
Thus the smallest possible upper breaks are $(1,2,4)$, corresponding to
lower breaks $(1,3,11)$.  The different exponent is therefore at least
\[
        2(8-1)+2(4-1)+8(2-1)=28.
\]
Consequently the normalized different of the degree-eight quotient is at
least $28/8=7/2$, and the same lower bound holds for $I$.
\end{proof}

\begin{proposition}\label{prop:2part}
One has $a\leq2$.
\end{proposition}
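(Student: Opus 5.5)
We argue as in Proposition~\ref{prop:p-part}, with Lemma~\ref{lem:char2local}(ii) in place of Corollary~\ref{cor:p2inertia}. We are in the case $N=2g$ with $N=2^a m$, and assume for contradiction that $a\geq3$.

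By Proposition~\ref{prop:rationalquotient}, $X/G\cong\Pone$ and
\[
 \sum_Q\delta_Q=2+\frac{2g-2}{N}=3-\frac{2}{N}<3.
\]
Since $G$ is cyclic and the quotient is rational, Lemma~\ref{lem:generate-stab} gives a point $P\in X$ whose stabilizer $G_P$ contains the full Sylow $2$-subgroup $C_{2^a}$ of $G$. The cover is totally ramified at $P$, so $G_P$ is the inertia group there; it is cyclic, being a subgroup of $G$.

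If $a\geq3$, then $8\mid |G_P|$. Lemma~\ref{lem:char2local}(ii) then gives a normalized different contribution $\delta_Q\geq 7/2$ at the branch point $Q$ under $P$. All other contributions are nonnegative, so $\sum_Q\delta_Q\geq7/2>3$, which contradicts the displayed identity. Hence $a\leq2$.

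The real content lies in Lemma~\ref{lem:char2local}(ii), which reduces the estimate to the cyclic degree-eight quotient via transitivity of the different. That lemma is already available, so the only point to check here is that the inertia group at $P$ is exactly $G_P$. This holds because $K$ is algebraically closed, so residue extensions are trivial and decomposition groups equal inertia groups. The case $N=2g-1$ needs no argument, since there $N$ is odd and $a=0$.
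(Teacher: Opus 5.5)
Your proof is correct and is the paper's argument: rationality of $X/G$ and Lemma~\ref{lem:generate-stab} give a stabilizer containing $C_{2^a}$. If $a\geq3$, Lemma~\ref{lem:char2local}(ii) then forces a contribution of at least $7/2$, against the total $3-1/(2^{a-1}m)<3$.

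One small slip: $X\to X/G$ need not be totally ramified at $P$; that holds only for $X\to X/G_P$. The identification of $G_P$ with the inertia group comes, as you say at the end, from the residue field being algebraically closed.
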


\begin{proof}
This is the boundary analogue of the first reduction in
\cite[Proposition~3.1]{Timpanella2026}.  By
Proposition~\ref{prop:rationalquotient} and
Lemma~\ref{lem:generate-stab}, some stabilizer contains the full Sylow
$2$-subgroup of $G$.  If $a\geq3$, Lemma~\ref{lem:char2local}(ii) shows
that the corresponding branch point contributes at least $7/2$, whereas
the total contribution is
\[
        3-\frac1{2^{a-1}m}<3.
\]
This is impossible.
\end{proof}

\subsection{The case $a=1$}

Now
\[
        N=2m=2g,\qquad g=m,
\]
so $m\geq3$ is odd.  Let
\[
        P=C_2,\qquad B=C_m,\qquad
        Y=X/P,\qquad Z=X/G\cong\Pone.
\]

\begin{proposition}\label{prop:two-wild-char2}
If $X\to Z$ has two wild branch points, then it has no other branch
point, both wild stabilizers have order $2m$, and
\[
        X:\quad y^2+y=c(x^m+x^{-m}),
        \qquad c\in K^\times.
        \tag{5.3}
\]
\end{proposition}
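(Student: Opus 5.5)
Setting $p=2$ in the master equation of Section~4 (with $c=2$), or equivalently using Lemma~\ref{lem:oddlocal}, each wild branch point with stabilizer of order $2t_k$ and reduced pole order $n_k$ contributes
\[
\delta_k=1-\frac1{2t_k}+\frac{n_k}{2}\ \ge\ \frac32-\frac1{2t_k}.
\]
Each tame branch point contributes at least $1/2$. By \textup{(2.2)} the total is $3-1/m$.

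First I show $n_1=n_2=1$ and that there are no tame points. Two wild points already contribute at least $3-\frac1{2t_1}-\frac1{2t_2}$. One extra tame point, of index $e$, would give total at least $3-\frac1{2t_1}-\frac1{2t_2}+\frac12-\frac1e$. Since $t_k\ge1$ and $e\ge3$ (it divides the odd $m$), this is at least $3-1+\frac12-\frac13>3-\frac1m$ unless the $t_k$ are small, so a short check is needed. Cleaner is to multiply by $2m$ and get the integer identity
\[
m\bigl(2(s-1)+n_1+n_2\bigr)=\bar d_1+\bar d_2+2\sum d_i-2.
\]
Using $\bar d_k\le m$ and $d_i\le m/3$, this gives $2(s-1)+n_1+n_2<2+2s/3$. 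Hence $s=0$ and $n_1+n_2=2$, i.e.\ $n_1=n_2=1$.

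Next, with $s=0$ the identity reads $2m=\bar d_1+\bar d_2+2m-2$, so $\bar d_1+\bar d_2=2$. Thus $\bar d_1=\bar d_2=1$ and $t_1=t_2=m$: both stabilizers are the full group $C_{2m}$. The same conclusion follows from Lemma~\ref{lem:lcm} applied to the tame $C_m$-cover $Y\to Z$, which is branched only over the two wild points. That lemma also shows $Y$ is rational.

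For the model, place the two branch points of $Y\to Z$ at $0$ and $\infty$ and write $Y:\ x$ with $z=x^m$. By Lemma~\ref{lem:fiberproduct}, $K(X)=K(x)(y)$ with $y^2+y=f$ for some $f\in K(z)$. The Artin--Schreier cover of the $z$-line has reduced simple poles exactly at $z=0$ and $z=\infty$ (since $n_k=1$, computed downstairs after quotienting by $C_t$ as in Lemma~\ref{lem:oddlocal}), so after an Artin--Schreier change $f=az+b/z+\text{const}$. The constant is absorbed, since $K$ is algebraically closed. Scaling $x$ by an $m$th root gives $a=b$, because $x\mapsto\lambda x$ sends $(a,b)$ to $(a\lambda^m,b\lambda^{-m})$. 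This yields \textup{(5.3)}.

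The step needing the most care is the reduction $n_k=1$ and $s=0$. In particular, one must use that tame indices are odd, hence $d_i\le m/3$, to close the borderline inequality.
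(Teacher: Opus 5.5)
There is a genuine gap in your exclusion of tame branch points. Your integer identity is correct, but the inequality you extract from it gives only $s\le 1$, not $s=0$. For $s=1$ it reads $n_1+n_2<8/3$, and $n_1=n_2=1$ satisfies this.

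Sharper numerical bounds cannot close this, because the data $s=1$, $t_1=t_2=1$, $n_1=n_2=1$, $e_1=m$ satisfy the identity exactly: $\bar d_1+\bar d_2+2d_1-2=m+m+2-2=2m$. In terms of (2.2), two wild points with $C_2$ stabilizers contribute $1$ each, and a tame point of index $m$ contributes $1-1/m$. The total is exactly $3-1/m$, so Riemann--Hurwitz alone cannot rule this configuration out.

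The missing ingredient is the global constraint on the tame quotient $Y\to Z$, which is what the paper uses. If $t_1=t_2=1$, the degree-$m$ cyclic cover $Y\to Z$ is unramified over both wild points. With $s=1$ it would then be branched over a single point, which is impossible for $m>1$ (Lemma~\ref{lem:lcm}). So $s=1$ forces some $t_k\ge 3$, hence $\bar d_k\le m/3$. Then the right-hand side of your identity is at most $m+m/3+2m/3-2=2m-2$, while the left-hand side is $m(n_1+n_2)\ge 2m$, a contradiction. The paper phrases the same dichotomy as the bound $4/3+1+2/3=3$ in one subcase and as at least two tame branch points in the other.

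A smaller point: at $s=0$ your inequality gives only $n_1+n_2<4$. To conclude $n_1+n_2=2$ you need that the $n_k$ are odd (Lemma~\ref{lem:oddlocal}), or you should first apply Lemma~\ref{lem:lcm} to get $\bar d_1=\bar d_2=1$. Once $s=1$ is excluded in this way, the rest of your argument agrees with the paper's proof. That includes the model via Lemma~\ref{lem:fiberproduct} and the rescaling $x\mapsto\lambda x$ with $\lambda^{2m}=b/a$.
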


\begin{proof}
This argument is the boundary equality case of the analysis in
\cite[Lemma~3.2]{Timpanella2026}.

There are at most two wild branch points because each contributes at
least one and the total is smaller than three.

Suppose that there are two.  If there were a tame branch point and one
of the wild inertia groups had odd part $t>1$, then by \textup{(5.1)}
the three contributions would be at least
\[
        \frac43+1+\frac23=3,
\]
a contradiction.  If both odd parts were trivial, then the Kummer cover
$Y\to Z$ would be unramified at the two wild points and, since $m>1$,
would have at least two tame branch points by Lemma~\ref{lem:lcm}.
The total contribution would then exceed three.  Hence there are no tame
branch points.

The Kummer cover $Y\to Z$ can ramify only over the two wild base points.
It is nontrivial, so Lemma~\ref{lem:lcm} gives full ramification at both:
their odd inertia parts are $m$.  Write the lower breaks as $mn_1,mn_2$,
where $n_1,n_2$ are positive odd integers.  Riemann--Hurwitz gives
\[
 2m-2
 =
 -4m+\bigl(2m-1+mn_1\bigr)
       +\bigl(2m-1+mn_2\bigr),
\]
hence
\[
        n_1+n_2=2.
\]
Thus $n_1=n_2=1$.

The two-point Kummer cover is rational; choose a coordinate $x$ on $Y$
so that the quotient coordinate is $z=x^m$.  The quadratic subcover of
the $z$-line has two simple poles, which may be placed at $0,\infty$:
\[
        y^2+y=a z+\frac{b}{z},
        \qquad ab\neq0.
\]
Choose $\mu\in K^\times$ with
\[
        \mu^{2m}=\frac{b}{a}.
\]
After the change $x\mapsto\mu x$, the two coefficients are equal.  This
gives \textup{(5.3)}.

Conversely, for any odd $m\geq3$ and $c\in K^\times$, the equation
\[
        y^2+y=c(x^m+x^{-m})
\]
defines a quadratic Artin--Schreier cover of the $x$-line with exactly
two poles, at $0$ and $\infty$, both of reduced order $m$.  Hence
\[
        2g(X)-2=-4+2(m+1)=2m-2,
\]
so $g(X)=m$.  The transformations
\[
        x\longmapsto\zeta x\quad(\zeta^m=1),
        \qquad y\longmapsto y+1
\]
commute and generate $C_m\times C_2\cong C_{2m}$, since $m$ is odd.
Thus every member of \textup{(5.3)} has the required cyclic group of
order $2m=2g$.
\end{proof}

Assume now that there is exactly one wild branch point $Q_0$.

\begin{lemma}\label{lem:eq-char2}
The following is the equality case of
\cite[Lemma~2.11]{Timpanella2026}, included here for convenience.
Let $d_0,d_1,d_2$ be pairwise coprime divisors of an odd integer $m$,
and assume $m/d_i\geq3$ for all $i$.  If
\[
        m+2=d_0+2d_1+2d_2,
        \tag{5.4}
\]
then, up to exchanging $d_1,d_2$,
\[
        (m;d_0,d_1,d_2)
        =
        (3;1,1,1)
\]
or
\[
        (15;1,3,5).
\]
\end{lemma}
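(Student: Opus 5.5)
We convert \textup{(5.4)} into an equation in the indices. Put $e_i=m/d_i$, so each $e_i$ is an odd divisor of $m$ with $e_i\geq3$. Dividing \textup{(5.4)} by $m$ gives
\[
        \frac1{e_0}+\frac2{e_1}+\frac2{e_2}=1+\frac2m .
\]
The coprimality hypothesis becomes a least-common-multiple condition. Indeed $\gcd(m/e_i,m/e_j)=m/\lcm(e_i,e_j)$, so pairwise coprimality of the $d_i$ is equivalent to
\[
        \lcm(e_i,e_j)=m\qquad\text{for all } i\neq j .
\]
This is the same kind of condition as in Lemma~\ref{lem:lcm}. From here the argument is a finite Egyptian-fraction case analysis, modeled on the proof of Theorem~\ref{thm:tame}. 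The constraint $\lcm(e_i,e_j)=m$ fixes $m$ once two of the indices are known, and the displayed equation then determines the third index.

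By symmetry assume $e_1\leq e_2$. The left-hand side must exceed $1$, and this bounds the indices.

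\emph{Case $e_1=e_2=3$.} The lcm condition forces $m=3$. The equation then gives $e_0=3$, i.e.\ $(3;1,1,1)$.

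\emph{Case $e_1=3$, $e_2\geq5$.} Then $m=\lcm(3,e_2)$ and
\[
        \frac1{e_0}=\frac13-\frac2{e_2}+\frac2m>\frac13-\frac2{e_2}.
\]
Since $e_2$ divides $m$, we have $2/m\leq 2/e_2$. For $e_2\geq 9$ this yields $1/e_0\geq 1/3-2/e_2\geq 1/9$, so $e_0\in\{3,5,7,9\}$. In addition $\lcm(e_0,e_2)=\lcm(3,e_2)$, which leaves a short finite check. The small values are treated directly:
\begin{itemize}[label=$\cdot$]
\item $e_2=5$ gives $m=15$ and $e_0=15$, i.e.\ $(15;1,5,3)$, which is the stated solution after swapping $d_1,d_2$.
\item $e_2=7$ gives $m=21$ and $e_0=7$, i.e.\ $d_0=3=d_2$, which violates coprimality.
\item $e_2=9$ gives $m=9$ and $1/e_0=1/3$, so $e_0=3$ and $\lcm(e_0,e_1)=3\neq m$.
\end{itemize}
For larger $e_2$ I expect the $2/m$ term to be too small to produce an integral $e_0$ compatible with both lcm conditions.

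\emph{Case $e_1,e_2\geq5$.} Then $1/e_0>1-4/5$ forces $e_0=3$, and the remaining equation is
\[
        \frac2{e_1}+\frac2{e_2}=\frac23+\frac2m .
\]
This forces $e_1=5$ and $e_2\leq7$, a finite set. Each candidate is checked against $\lcm(3,e_1)=\lcm(3,e_2)=\lcm(e_1,e_2)=m$. For example $(e_1,e_2)=(5,5)$ gives $2/m=2/15$, but $\lcm(5,5)=5\neq15$; the case $e_2=7$ gives a nonintegral or incompatible $m$.

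The main obstacle is organizing the middle case with $e_2$ large, so that the bound is uniform rather than an open-ended list. There I would use that $e_2\mid m$ and $\lcm(e_0,e_2)=m$ together force $m\leq e_0e_2\leq 9e_2$. I would then verify that the only consistent solutions are the two stated ones.
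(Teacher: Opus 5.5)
\textbf{Verdict: there is a genuine gap, in the case $e_1=3$, $e_2\geq 11$, though it is easy to close.} Your reduction to $\frac1{e_0}+\frac2{e_1}+\frac2{e_2}=1+\frac2m$ with $\lcm(e_i,e_j)=m$ for $i\neq j$ is correct. The cases $e_1=e_2=3$ and $e_1,e_2\geq5$ are also handled correctly; in the latter, $(e_1,e_2)=(5,7)$ gives the integer $m=105$, which is excluded because $\lcm(3,5)=15\neq105$.

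The case $e_1=3$, $e_2\geq11$ is left as an expectation. Your bound $e_0\leq 9$ does not turn it into a finite check, for three reasons:
\begin{itemize}
\item $e_2$ is still unbounded.
\item The condition $\lcm(e_0,e_2)=\lcm(3,e_2)$ is vacuous when $e_0=3$.
\item Your proposed bound $m\leq e_0e_2\leq 9e_2$ adds nothing, since $m=\lcm(3,e_2)\leq 3e_2$ already.
\end{itemize}
So, as written, the argument does not exclude large $e_2$.

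The missing step uses only data you already have. Since $m=\lcm(3,e_2)$, either $m=e_2$ or $m=3e_2$.
\begin{itemize}
\item If $m=e_2$, the equation gives $1/e_0=1/3$, so $e_0=3$. Then $m=\lcm(e_0,e_1)=3<e_2$, a contradiction.
\item If $m=3e_2$ (so $3\nmid e_2$), the equation gives $1/e_0=(e_2-4)/(3e_2)$, that is, $e_0=3e_2/(e_2-4)$. Because $\gcd(e_2-4,e_2)=\gcd(4,e_2)=1$, this forces $(e_2-4)\mid 3$, so $e_2\in\{5,7\}$. These are exactly your two bullet cases.
\end{itemize}
Alternatively, use the third condition $\lcm(e_0,e_1)=\lcm(e_0,3)=m$, which you listed but never used in this case. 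It gives $m\leq 3e_0\leq 27$, hence $e_2\leq 27$, and the check becomes genuinely finite.

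With this step added, your Egyptian-fraction route is complete and mirrors the proof of Theorem~\ref{thm:tame}. The paper instead stays with the $d_i$: it uses that pairwise coprime divisors satisfy $d_0d_1d_2\mid m$, and splits according to which $d_i$ equal $1$. That route never produces an open-ended range of indices, which is what caused the difficulty in your middle case.
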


\begin{proof}
Since $d_0,d_1,d_2$ are pairwise coprime divisors of $m$, their product
divides $m$.

Assume first that $d_0,d_1,d_2>1$.  Since all three integers are odd,
$d_i\geq3$.  Then
\[
\begin{split}
 d_0d_1d_2-d_0-2d_1-2d_2+2
 &=
 d_0(d_1d_2-1)-2(d_1+d_2-1)\\
 &\geq
 3(d_1d_2-1)-2(d_1+d_2-1)>0.
\end{split}
\]
As $d_0d_1d_2\mid m$, this contradicts \textup{(5.4)}.

Suppose next that exactly one of $d_1,d_2$ is equal to one; by symmetry
take $d_1=1<d_2$.  Equation \textup{(5.4)} becomes
\[
        m=d_0+2d_2.
\]
If $d_0>1$, then $d_0d_2\mid m$, so
\[
        d_0d_2\mid d_0+2d_2.
\]
Reducing modulo $d_0$ and using $\gcd(d_0,d_2)=1$ gives $d_0\mid2$,
impossible because $d_0$ is odd and greater than one.  Thus $d_0=1$.
Now $m=2d_2+1$, and since $d_2\mid m$ we obtain $d_2\mid1$, again a
contradiction.  Hence this case does not occur.

If $d_1=d_2=1$, then \textup{(5.4)} gives
\[
        m=d_0+2.
\]
Since $d_0\mid m$, we have $d_0\mid2$.  Thus $d_0=1$ and $m=3$.

It remains to assume $d_0=1$ and $d_1,d_2>1$.  Equation \textup{(5.4)}
gives
\[
        m=2d_1+2d_2-1.
\]
Since $d_1d_2\mid m$,
\[
        d_1d_2\leq 2d_1+2d_2-1,
\]
or equivalently
\[
        (d_1-2)(d_2-2)\leq3.
\]
The integers $d_1,d_2$ are odd and at least $3$.  Hence, up to order,
the only possibilities are
\[
        (d_1,d_2)=(3,3)\quad\text{or}\quad(3,5).
\]
The first is excluded by $\gcd(d_1,d_2)=1$.  The second gives
$m=15$.  This proves the assertion.
\end{proof}

\begin{proposition}\label{prop:onewild-char2}
Assume that $X\to Z$ has exactly one wild branch point.  Then there are
exactly two tame branch points, and, up to exchanging them, one of the
following occurs:
\[
        (m;d_0,d_1,d_2)=(3;1,1,1)
\]
or
\[
        (m;d_0,d_1,d_2)=(15;1,3,5).
        \tag{5.5}
\]
The corresponding curves may be written as
\[
\begin{cases}
 w^3=z(z-1),\\
 y^2+y=a z,
\end{cases}
\qquad (N,g)=(6,3),
\tag{5.6}
\]
or
\[
\begin{cases}
 w^{15}=z^3(z-1)^5,\\
 y^2+y=a z,
\end{cases}
\qquad (N,g)=(30,15),
\tag{5.7}
\]
with $a\neq0$.
\end{proposition}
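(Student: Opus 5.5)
The strategy is to mirror the odd-characteristic analysis of Section~4 with $p=2$, where now $m$ is odd, $m\geq3$, and $c=2$. Let $Q_0$ be the wild branch point, with inertia of order $2t$ and $t\mid m$. Put $d_0=m/t$ and write the lower break as $j=tn$ with $n$ odd, as in Lemma~\ref{lem:oddlocal}. Let $s$ be the number of tame branch points, with indices $e_i\mid m$ and $d_i=m/e_i$. The computation that produced \textup{(4.5)}, specialized to $p=2$ and $c=2$, gives
\[
 m\bigl(2(s-2)+n\bigr)=d_0+2\sum_{i=1}^s d_i-2 .
\]
Since $m$ is odd, each $e_i\geq3$, so $d_i\leq m/3$. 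Together with $d_0\leq m$ this yields $2s-4+n<1+2s/3$, hence $s<3$ once $n\geq1$.

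Next I will exclude $s\leq1$. If $s=0$, then $Y\to Z$ ramifies only over $Q_0$, so $m=1$, which is impossible. If $s=1$, Lemma~\ref{lem:lcm} gives $t=e_1=m$ and $d_0=d_1=1$, and the equation becomes $m(n-2)=1$. This is impossible for odd $n$ and $m\geq3$. So $s=2$, and the equation reads
\[
 m n=d_0+2d_1+2d_2-2 .
\]
The bound $d_0+2(d_1+d_2)\leq m+4m/3$ forces $n=1$, so $m+2=d_0+2d_1+2d_2$, which is exactly \textup{(5.4)}.

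To apply Lemma~\ref{lem:eq-char2} I must check its hypotheses. The conditions $m/d_1,m/d_2\geq3$ are automatic. For $d_0$, if $t=1$ then the Kummer cover has two branch points, and Lemma~\ref{lem:lcm} gives $d_1=d_2=1$ and $d_0=m$. Then \textup{(5.4)} reads $m+2=m+4$, a contradiction. Hence $t>1$, so $t\geq3$, and the Kummer cover $Y\to Z$ has three branch points. Lemma~\ref{lem:lcm} then gives pairwise coprimality of $d_0,d_1,d_2$. Lemma~\ref{lem:eq-char2} now yields exactly \textup{(5.5)}.

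For the models, Lemma~\ref{lem:fiberproduct} (valid for $p=2$) writes $X$ as the fiber product of the Kummer cover $Y\to Z$ with a quadratic Artin--Schreier cover $y^2+y=f$ of $Z$. The only ramification of $y^2+y=f$ on $Z$ is at $Q_0$. Its break there, after quotienting by $C_t$ and passing to $Z$, is $n=1$ (Lemma~\ref{lem:oddlocal}), so $f$ has a single simple pole. Placing $Q_0$ at $z=\infty$ and reducing, $f=az$ with $a\neq0$.

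For the Kummer part, put the tame branch points at $0,1$ and $Q_0$ at $\infty$ with full inertia. For $m=3$ the residues are units mod $3$ summing to zero, giving $w^3=z(z-1)$ after normalization. For $m=15$ the indices are $5,3$ at $0,1$, with residues in $3(\mathbb Z/15)^\times$ and $5(\mathbb Z/15)^\times$. Normalizing as in the tame sporadic checks gives $w^{15}=z^3(z-1)^5$. Finally I will verify the genera by Riemann--Hurwitz, as in the proof of Theorem~\ref{thm:oddwild}. The tame curve has genus $1$ for $m=3$ and genus $2$ for $m=15$. The function $z$ has a single pole of order $m$ there, which is odd and hence reduced, so $2g-2=2(2h-2)+(m+1)$. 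This gives $g=3$ and $g=15$ respectively.

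The main obstacle is the bookkeeping that justifies coprimality in Lemma~\ref{lem:eq-char2}, in particular ruling out $t=1$. The rest is a direct specialization of Section~4.
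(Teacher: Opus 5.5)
Your reduction follows the paper's argument. You specialize the master equation \textup{(4.5)} to $p=2$, $c=2$ instead of redoing Riemann--Hurwitz case by case, and you force $n=1$ before disposing of $t=1$ rather than after. Otherwise the case analysis is the same: $s\leq2$, then exclude $s=0$, $s=1$ and $t=1$, then apply Lemma~\ref{lem:eq-char2} via the pairwise coprimality from Lemma~\ref{lem:lcm}. The fiber-product construction of the models is also the same, and all of these steps are correct.

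There is a concrete error in your genus verification for $m=15$: the Kummer curve $w^{15}=z^3(z-1)^5$ has genus $4$, not $2$. Over $z=0$ there are three points of index $5$, over $z=1$ five points of index $3$, and over $\infty$ one point of index $15$. Hence
\[
2h-2=-30+(15-3)+(15-5)+(15-1)=6,\qquad h=4.
\]
With your value $h=2$, your own formula $2g-2=2(2h-2)+(m+1)$ gives $2g-2=20$, i.e.\ $g=11$. So as written the check does not produce $(N,g)=(30,15)$. With $h=4$ it gives $2g-2=12+16=28$ and $g=15$, as required. You appear to have used the genus of the odd-characteristic curve $w^{10}=z^2(z-1)^5$.

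Two smaller points:
\begin{itemize}
\item In the case $s=1$, say explicitly that $t=1$ is excluded because the Kummer cover would then branch over a single point.
\item For $m=15$, the statement ``may be written as'' needs the uniqueness of the Kummer data, so carry out the normalization rather than just asserting it. Normalize the full-inertia residue to $1$; the remaining residues lie in $\{3,6,9,12\}$ and $\{5,10\}$. The zero-sum condition then has the unique solution $(1,9,5)$. Multiplication by $7$ turns this into $(7,3,5)$, which is exactly the data of $w^{15}=z^3(z-1)^5$.
\end{itemize}
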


\begin{proof}
A tame branch point contributes at least $2/3$, while the wild point
contributes at least one, so there are at most two tame branch points.

There cannot be no tame branch point.  Indeed, if the odd part $t$ of
the wild inertia is one, then the degree-$m$ Kummer cover $Y\to Z$ is
unramified, which is impossible for a nontrivial cover of $\Pone$.  If
$t>1$, then $Y\to Z$ is ramified over exactly one point, again impossible
by Lemma~\ref{lem:lcm}.  Hence at least one tame branch point occurs.

There cannot be exactly one tame branch point.  If $t=1$, the Kummer
cover $Y\to Z$ would ramify at only one point.  Thus $t>1$, and
Lemma~\ref{lem:lcm} forces full ramification $m$ at the wild and tame
base points.  Let $j=mn$, with $n$ odd.  The wild point has inertia
order $2m$ and contributes different exponent
\[
        (2m-1)+j,
\]
while the tame branch point contributes $2(m-1)$.  Hence
Riemann--Hurwitz gives
\[
\begin{split}
        2g-2
        &=-4m+(2m-1+j)+2(m-1)\\
        &=j-3.
\end{split}
\]
This is the boundary version of equation (3.1) in the proof of
\cite[Theorem~1.1]{Timpanella2026}.
Since $g=m$, this becomes $j=2m+1$, impossible because $m\mid j$ and
$m\geq3$.

Hence there are two tame branch points.  If $t=1$, the Kummer cover is
unramified at the wild point and ramified only at the two tame points,
both with index $m$.  If $j=n$ is the wild break, then the wild point
has inertia order $2$ and contributes $m(n+1)$ to the global different,
whereas each tame branch point contributes $2(m-1)$.  Thus
\[
\begin{split}
        2g-2
        &=-4m+m(n+1)+4(m-1)\\
        &=2m-4+m(n-1).
\end{split}
\]
Compare equation (3.2) in the proof of
\cite[Theorem~1.1]{Timpanella2026}.
Since $g=m$, one gets $m(n-1)=2$, impossible for odd $m\geq3$.

Thus $t>1$.  Let
\[
        d_0=\frac mt,\qquad d_i=\frac m{e_i}\quad(i=1,2).
\]
By Lemma~\ref{lem:lcm}, the three $d_i$ are pairwise coprime.
Write the wild break as $j=tn$, with $n$ positive and odd.
At the wild branch point the inertia group has order $2t$; its local
different exponent is $2t-1+j$, and there are $d_0=m/t$ points above
the corresponding point of $Z$.  The two tame branch points contribute
$2(m-d_1)$ and $2(m-d_2)$, respectively.  Therefore
Riemann--Hurwitz gives
\[
\begin{split}
 2m-2
 &= -4m+d_0(2t-1+j)
       +2(m-d_1)+2(m-d_2)\\
 &= 2m-2d_1-2d_2+d_0(j-1).
\end{split}
\]
Since $d_0t=m$ and $j=tn$, this is equivalent to
\[
        mn+2=d_0+2d_1+2d_2.
        \tag{5.8}
\]
Since $t,e_1,e_2$ are odd and greater than one, they are at least
three.  Hence
\[
        d_0,d_1,d_2\leq\frac m3,
        \qquad
        d_0+2d_1+2d_2\leq\frac{5m}{3}.
\]
The integer $n$ is positive and odd.  If $n\geq3$, then
\[
        mn+2\geq3m+2>\frac{5m}{3},
\]
contradicting \textup{(5.8)}.  Thus $n=1$, so \textup{(5.8)} becomes
\[
        m+2=d_0+2d_1+2d_2.
\]
Lemma~\ref{lem:eq-char2} now gives exactly the two cases in
\textup{(5.5)}.

For the two numerical possibilities we now verify the three-point
Kummer exponent data.  When $m=3$, all three inertia indices are $3$,
so every exponent residue is $1$ or $2$.  If $k$ of the three residues
are $1$, their sum is congruent to $-k$ modulo $3$; the zero-sum
condition therefore gives $k=0$ or $3$.  Multiplication by the unit $2$
identifies these two cases, so the vector may be taken to be
$(1,1,1)$.

When $m=15$, the inertia indices are $15,5,3$.  Normalize the
full-inertia residue to $1$.  The inertia-five residue is one of
$3,6,9,12$, while the inertia-three residue is $5$ or $10$.  The
zero-sum condition has the unique solution
\[
        (1,9,5)
\]
with this normalization.  Multiplication by $7$ gives the equivalent
vector $(7,3,5)$, i.e. $(3,5,7)$ after ordering the finite branch points
first and the point at infinity last.  This proves the claimed
uniqueness in both cases.

Thus for $m=3$ the tame cover has three totally ramified points and is
represented by $w^3=z(z-1)$.  Put the wild base point at infinity.
The quadratic subcover has a simple pole there, hence $y^2+y=az$.
The Kummer curve has genus one, and $z$ has a pole of order three at the
unique point above infinity.  Thus the Artin--Schreier cover has one pole
of order three, and Riemann--Hurwitz gives $g(X)=3$.

For $m=15$, the three tame ramification indices are $15,5,3$.
The Kummer equation
\[
        w^{15}=z^3(z-1)^5
\]
has precisely these indices, with full ramification at infinity.
Riemann--Hurwitz for this degree-$15$ Kummer cover gives
\[
        2h-2=-30+(15-3)+(15-5)+(15-1)=6,
\]
so its genus is $h=4$.  Moreover, $z$ has a pole of order fifteen at the
unique point above infinity.  Again the quadratic subcover is $y^2+y=az$, and
Riemann--Hurwitz gives
\[
        2g(X)-2=2(2\cdot4-2)+(15+1)=28,
\]
so $g(X)=15$.  In both cases the two extensions over $K(z)$ have
coprime degrees, so their compositum has cyclic Galois group of order
$2m$.
\end{proof}

\subsection{The case $a=2$}

We now assume
\[
        N=4m=2g,
        \qquad g=2m,
\]
with $m$ odd.

\begin{proposition}\label{prop:a2}
One has
\[
        m=1\quad\text{or}\quad m=3.
\]
Thus
\[
        (N,g)=(4,2)\quad\text{or}\quad(12,6).
\]
\end{proposition}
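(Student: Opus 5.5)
The plan is to use the exact Riemann--Hurwitz count for $X\to Z=X/G\cong\Pone$ with $N=4m$ and $g=2m$, in the form
\[
        2g-2=4m-2=-8m+\sum_{Q}(\text{global different over }Q).
\]
The goal is to show that the point carrying the Sylow $2$-subgroup already uses almost all of the available different. This leaves room for at most one further branch point, which must be tame, and Lemma~\ref{lem:lcm} then pins down $m$.

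First, by Proposition~\ref{prop:rationalquotient} and Lemma~\ref{lem:generate-stab}, some branch point $Q_0$ has inertia of order $4t$, with $t\mid m$ odd. Put $d_0=m/t$, the number of points of $X$ over $Q_0$. Lemma~\ref{lem:char2local}(i) gives lower breaks $j_1\geq t$ and $j_2\geq3j_1$, so the contribution over $Q_0$ is
\[
 d_0\bigl(4t-1+2j_1+j_2\bigr)=9m-d_0+E,\qquad E\geq0,
\]
where $E$ measures the excess of $(j_1,j_2)$ over the minimal values $(t,3t)$. Substituting, the remaining branch points must contribute exactly $3m-2+d_0-E\leq 4m-2-E$.

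Next, I rule out the other possible branch points.
\begin{itemize}[label=\textendash]
\item A second wild point would have inertia of order $2t'$ or $4t'$. By Lemma~\ref{lem:oddlocal} its normalized contribution is at least $\tfrac32-\tfrac1{2t'}\geq1$, so its global contribution is at least $4m$. This is too large, so no second wild point exists.
\item A tame point of index $e\geq3$ (odd, since $e\mid m$) contributes $4(m-m/e)\geq 8m/3$. Two such points would give at least $16m/3>4m$, so there is at most one tame branch point.
\end{itemize}
Thus the branch locus consists of $Q_0$ and at most one tame point $Q_1$.

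Now apply Lemma~\ref{lem:lcm} to the tame cyclic cover $X/C_4\to Z$ of degree $m$. If $m>1$, this cover is nontrivial and ramifies at most over $Q_0$ and $Q_1$. Since a nontrivial tame cyclic cover of $\Pone$ cannot ramify over fewer than two points, both must occur, each with full index $m$. Hence $t=e_1=m$ and $d_0=d_1=1$. The tame contribution is then $4(m-1)$, and the count becomes
\[
 4(m-1)=3m-1-E,\qquad\text{that is,}\qquad m=3-E.
\]
Since $E\geq0$ and $m$ is odd, $m\leq3$, so $m\in\{1,3\}$, with $E=0$ (minimal breaks) when $m=3$. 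As a consistency check, for $m=1$ there is no tame point and the equation forces $E=2$. The values $(N,g)=(4,2),(12,6)$ follow.

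The main obstacle I expect is justifying the local bound at $Q_0$ carefully in its global form. One must check that the $d_0$ points over $Q_0$ each contribute $4t-1+2j_1+j_2$, with $j_1\geq t$ and $j_2\geq3j_1$ as in Lemma~\ref{lem:char2local}, and that $E\geq0$ is all that is needed. One must also make sure that the "second wild point" exclusion really uses the correct normalization, namely that the contribution is $N\delta$ rather than $\delta$. Both are direct from the earlier lemmas, but the bookkeeping of $d_0$ versus $t$ is where an error is most likely.
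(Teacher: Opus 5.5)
Your proof is correct and follows the paper's argument. The Sylow-$2$ point already uses at least $9m-d_0$ of the different, which rules out a second wild point and a second tame point; Lemma~\ref{lem:lcm} then forces full ramification at the two remaining points, and Riemann--Hurwitz gives $m\le 3$. Your bookkeeping with the excess $E$ is the unnormalized form of the paper's inequality $4m-2\geq 5m-5$. The only nit is that a second wild point with inertia of order $4t'$ is covered by Lemma~\ref{lem:char2local}(i), or by the trivial bound $\delta\ge 1$, rather than by Lemma~\ref{lem:oddlocal}.
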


\begin{proof}
The same numerical estimate appears in
\cite[Proposition~3.1]{Timpanella2026}; here the boundary equality
$N=2g$ leaves the exceptional case $m=3$.

Let $Q_0$ be a branch point whose inertia contains the full $C_4$.
By \textup{(5.2)},
\[
        \delta_{Q_0}\geq\frac94-\frac1{4t}.
\]
There is no second wild branch point, since it would contribute at least
one and the total in \textup{(2.2)} is smaller than three.

Assume $m>1$.  Consider the tame quotient by the $C_4$-subgroup.  It is
a nontrivial cyclic cover of $\Pone$ of degree $m$.  Since the wild
branch contributes at least $2$ by \textup{(5.2)}, two tame branch points
would contribute at least another $2\cdot(2/3)$, and
\[
        2+2\cdot\frac23>3.
\]
Thus there is room for at most one tame branch point of $X\to X/G$.
If the odd part of the inertia at $Q_0$ were trivial, then this
degree-$m$ tame cover would have at most one branch point, impossible by
Lemma~\ref{lem:lcm}.  Hence it ramifies at $Q_0$, and it must ramify at
one further point.  By Lemma~\ref{lem:lcm}, a cyclic tame cover of
$\Pone$ with exactly two branch points is totally ramified at both.
Thus the stabilizers for $X\to X/G$ have orders $4m$ and $m$.

Using \textup{(5.2)} and the tame contribution $1-1/m$, we obtain
\[
\begin{split}
 2g-2
 &\geq
 -8m+
 4m\left(\frac94-\frac1{4m}+1-\frac1m\right)\\
 &=5m-5.
\end{split}
\]
Since $g=2m$, the left-hand side is $4m-2$.  Hence $m\leq3$.  As $m$
is odd and greater than one, $m=3$.  Together with the case $m=1$ this
proves the result.
\end{proof}

\subsection{The $C_4$ family in genus two}

We use the Artin--Schreier--Witt map
\[
        \wpw=F-\operatorname{id}:W_2(K(x))\longrightarrow W_2(K(x)).
\]
Every class has a reduced representative, and reduced representatives
modulo the natural $(\mathbb Z/4\mathbb Z)^\times$-action classify
$C_4$-covers of the fixed projective line; see
\cite[Proposition~2.4]{DangHippold2024}.  We shall also use explicitly
the length-two Witt addition law in characteristic two:
\[
 (u_0,u_1)+(v_0,v_1)
 =
 (u_0+v_0,\;u_1+v_1+u_0v_0).
 \tag{5.9}
\]
In particular,
\[
        \wpw(0,h)=(0,h^2+h).
        \tag{5.10}
\]
Since $K$ is algebraically closed, every constant Witt vector belongs to
$\wpw(W_2(K))$.  Indeed, for a prescribed constant vector $(A,B)$ one
first chooses $r\in K$ with $r^2+r=A$ and then chooses the second
coordinate so that $\wpw(r,s)=(A,B)$.

\begin{theorem}\label{thm:C4normal}
Let $\Char(K)=2$.  A pair $(X,G)$ with
\[
        g(X)=2,\qquad G\cong C_4
\]
is, after a coordinate change on $X/G$, represented by
\[
        \wpw(Y_0,Y_1)=(x,\lambda x^3),
        \qquad \lambda\in K^\times.
        \tag{5.11}
\]
No uniqueness of the parameter $\lambda$ is asserted here.
\end{theorem}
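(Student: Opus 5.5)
Our strategy is to pin down the ramification data exactly, using the equality case of Lemma~\ref{lem:char2local}(i), and then to normalize an Artin--Schreier--Witt equation for the $C_4$-cover of $X/G\cong\Pone$.

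\textbf{Step 1: ramification data.} Here $N=4$, $m=1$, $g=2$. By Proposition~\ref{prop:rationalquotient}, $X/G\cong\Pone$ and
\[
\sum_Q\delta_Q=2+\tfrac{2}{4}=\tfrac52 .
\]
By Lemma~\ref{lem:generate-stab}, some branch point $Q_0$ has inertia equal to all of $C_4$. Lemma~\ref{lem:char2local}(i) with $t=1$ gives $\delta_{Q_0}=(3+2j_1+j_2)/4$, where $j_1\ge1$ and $j_2\ge3j_1$; hence $\delta_{Q_0}\ge2$. Every other branch point is wild with inertia $C_2$ (there are no tame ones since $m=1$), and by (5.1) each contributes at least $1$. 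The remaining budget is $1/2$, so $Q_0$ is the only branch point and $\delta_{Q_0}=5/2$. This gives $2j_1+j_2=7$ with $j_2\ge3j_1$ and $j_1\ge1$, whose only solution is $j_1=1$, $j_2=5$. Herbrand's function then gives upper breaks $u_1=1$ and $u_2=(1+5)/2=3$.

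\textbf{Step 2: a reduced Witt vector.} Move $Q_0$ to $\infty$. By the classification of $C_4$-covers via $\wpw$ \cite[Proposition~2.4]{DangHippold2024}, $K(X)$ is given by $\wpw(Y_0,Y_1)=(f_0,f_1)$ with $(f_0,f_1)$ reduced. Since the cover is unramified outside $\infty$, both $f_0$ and $f_1$ are polynomials in $x$ with no monomials of even degree except possibly constants. The upper breaks are read off from the reduced vector: $u_1=\deg f_0$ and $u_2=\max(2\deg f_0,\deg f_1)$. Therefore $\deg f_0=1$, so $f_0=\alpha x+\beta$ with $\alpha\ne0$, and $\deg f_1=3$, since $3>2$. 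So $f_1=\lambda x^3+\gamma x+\epsilon$ with $\lambda\neq0$.

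\textbf{Step 3: normalization.} Subtracting a constant Witt vector, which lies in $\wpw(W_2(K))$ as noted before the theorem, together with the addition law (5.9), removes $\beta$ and $\epsilon$. One has to check that the cross terms introduced by (5.9) are only of degree at most $1$, plus even-degree terms that are reduced using (5.10). Scaling $x\mapsto x/\alpha$ makes $f_0=x$. It remains to kill the linear term $\gamma x$ of $f_1$. For this, substitute $x\mapsto x+c$ and use that $(x+c,\dots)$ differs from $(x,\dots)$ by the constant vector $(c,0)$. By (5.9), this shifts the second coordinate by $cx$ plus a function of the new cubic expansion. Expanding $\lambda(x+c)^3$ produces $\lambda c^2x+\lambda c x^2$ plus a constant, and the $x^2$ term reduces via (5.10) to $\sqrt{\lambda c}\,x$. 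Together with the $cx$ cross term, this gives a linear coefficient that is a nonconstant polynomial equation in $c$ (after substituting $c=e^2$), solvable since $K$ is algebraically closed. The result is (5.11).

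\textbf{Main obstacle.} I expect Step 3 to be the main difficulty: correctly tracking the Witt carry terms in (5.9) under the translation and the even-degree reductions, and making sure the resulting equation in $c$ is genuinely nonconstant so that $\gamma$ can be eliminated. A quick converse check closes the proof. For (5.11) the breaks are $1,3$, so $d=3+2+5=10$ and $2g-2=-8+10=2$, giving $g=2$.
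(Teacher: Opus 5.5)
Your proposal is correct and follows essentially the same route as the paper. The budget argument gives a single branch point with lower breaks $(1,5)$ and reduced form $(ax,bx^3+cx)$, and your translation plus the $\wpw(0,\eta x)$ reduction gives the paper's quartic \textup{(5.16)} (your $\lambda e^4+e^2+\sqrt{\lambda}\,e+\gamma=0$ becomes it under $s=\sqrt{\lambda}\,e$), with the affine change merely split into scaling and translation. One small slip: a second branch point need not have inertia $C_2$, since it could also have inertia $C_4$, but it still contributes at least $1$, so the conclusion is unaffected.
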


\begin{proof}
There is exactly one branch point.  Indeed, the branch point whose
stabilizer contains $C_4$ contributes at least two by \textup{(5.2)},
whereas the total contribution is
\[
        3-\frac12=\frac52;
\]
any second wild branch point would contribute at least one.

Let $j_1<j_2$ be the lower jumps.  By
Lemma~\ref{lem:char2local}(i),
\[
        j_1\geq1,\qquad j_2\geq3j_1.
\]
The different exponent is
\[
        d=3+2j_1+j_2.
\]
Since Riemann--Hurwitz gives $d=10$, we obtain
\[
        (j_1,j_2)=(1,5).
\]
The corresponding upper jumps are
\[
        (u_1,u_2)=(1,3).
\]

Move the unique branch point to infinity.  Here a length-two Witt
vector is called \emph{reduced} if, at every pole, neither component has
a polar term whose exponent is divisible by two.  On $\Pone$ the usual
successive Witt-coboundary reductions can be performed at all poles
simultaneously.  For a reduced length-two Witt vector, the upper-break
formula of \cite[Theorem~2.3]{ElderKeating2026} gives
\[
        u_1=\deg(f_0),\qquad
        u_2=\max\{2u_1,\deg(f_1)\},
\]
where only odd exponents occur in the reduced polynomial parts.  Hence
a reduced representative has the form
\[
        (a x,\;b x^3+c x),
        \qquad a,b\in K^\times,\quad c\in K,
        \tag{5.12}
\]
after removing constant terms.

We now record the effect of an affine change of coordinate
\[
        x\longmapsto\alpha x+\beta,\qquad \alpha\neq0.
\]
Substitution in \textup{(5.12)} gives
\[
\begin{split}
 F_0&=a\alpha x+A,\qquad A=a\beta,\\
 F_1&=b\alpha^3x^3+b\alpha^2\beta x^2
      +\alpha(b\beta^2+c)x+C
\end{split}
\]
for a constant $C\in K$.  Because every constant Witt vector is a
coboundary, we may add a constant vector $(A,\gamma)\in
\wpw(W_2(K))$ and choose $\gamma$ so that the new constant term in the
second coordinate vanishes.  By the addition law \textup{(5.9)}, the
result is
\[
 \left(
 a\alpha x,\;
 b\alpha^3x^3+b\alpha^2\beta x^2
 +\alpha(c+b\beta^2+a^2\beta)x
 \right).
 \tag{5.13}
\]
Next add $\wpw(0,\eta x)$, with
\[
        \eta^2=b\alpha^2\beta.
\]
Using \textup{(5.10)}, this removes the $x^2$-term and yields
\[
\begin{split}
 a'&=a\alpha,\\
 b'&=b\alpha^3,\\
 c'&=\alpha(c+b\beta^2+a^2\beta)+\eta.
\end{split}
\tag{5.14}
\]

Choose $\alpha=a^{-1}$ and write
\[
        \lambda=\frac{b}{a^3},\qquad
        C_0=\frac ca,\qquad
        t=a\beta.
\]
Then
\[
        b'=\lambda,\qquad
        c'=C_0+\lambda t^2+t+\sqrt{\lambda t}.
        \tag{5.15}
\]
Let $s=\sqrt{\lambda t}$.  Since $t=s^2/\lambda$, the condition
$c'=0$ is equivalent to
\[
        s^4+s^2+\lambda s+\lambda C_0=0.
        \tag{5.16}
\]
The field $K$ is algebraically closed, so \textup{(5.16)} has a
solution $s\in K$.  Choosing the corresponding $t$ and hence $\beta$
gives $c'=0$.  Thus every cover is represented by
\[
        (x,\lambda x^3).
\]

Conversely, let $\lambda\neq0$.  The first component $x$ has a simple
pole at infinity and is not an Artin--Schreier coboundary.  Hence the
Witt class $(x,\lambda x^3)$ has exact order four and defines a connected
$C_4$-cover of $\Pone$.  It is reduced and has upper jumps
\[
        u_1=1,\qquad u_2=\max\{2,3\}=3.
\]
Thus the lower jumps are
\[
        j_1=1,\qquad j_2=1+2(3-1)=5,
\]
and the different exponent at the unique branch point is
\[
        d=(1+1)(4-1)+(5-1)(2-1)=10.
\]
Riemann--Hurwitz gives
\[
        2g(X)-2=-8+10=2,
\]
so $g(X)=2$.  Therefore every $\lambda\in K^\times$ in
\textup{(5.11)} indeed produces a pair of the stated type.
\end{proof}

\subsection{The $C_{12}$ family in genus six}

\begin{theorem}\label{thm:C12normal}
Let $\Char(K)=2$.  A pair $(X,G)$ with
\[
        g(X)=6,\qquad G\cong C_{12}
\]
is the compositum, over a rational function field $K(z)$, of
\[
        x^3=z
        \tag{5.17}
\]
and
\[
        \wpw(Y_0,Y_1)
        =
        \left(\frac1z,\frac{\lambda}{z}\right),
        \qquad \lambda\in K.
        \tag{5.18}
\]
No uniqueness of the parameter $\lambda$ is asserted here.
\end{theorem}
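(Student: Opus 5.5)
The plan is to first pin down the ramification data of $X\to X/G$ from the proof of Proposition~\ref{prop:a2}, then descend the $C_4$-part to the base as in Lemma~\ref{lem:fiberproduct}, and finally normalize the resulting Witt vector. In the proof of Proposition~\ref{prop:a2} with $m=3$ we already saw several facts. There is exactly one wild branch point $Q_0$, whose stabilizer has order $4m=12$. There is exactly one tame branch point $Q_1$, whose stabilizer is $C_3$. There are no further branch points. Let $j_1<j_2$ be the positive lower breaks at $Q_0$. By Lemma~\ref{lem:char2local}(i), $3\mid j_1$ and $j_2\geq 3j_1$. The local different exponent at $Q_0$ is $11+2j_1+j_2$. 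Above $Q_1$ there are four points, each contributing $2$. Riemann--Hurwitz with $g=6$ then reads
\[
10=-24+(11+2j_1+j_2)+8,
\]
so $2j_1+j_2=15$. Together with the inequalities this forces $(j_1,j_2)=(3,9)$. Quotienting by the tame $C_3$ and applying Herbrand's function as in Lemma~\ref{lem:char2local} gives upper breaks $(u_1,u_2)=(1,2)$ for the $C_4$-extension.

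Next I set $Z=X/G\cong\Pone$ with coordinate $z$, placing $Q_0$ at $z=0$ and $Q_1$ at $z=\infty$. The tame cover $X/C_4\to Z$ is cyclic of degree $3$ and ramified exactly over these two points. By Lemma~\ref{lem:lcm} it is totally ramified at both and rational, so it is $x^3=z$, which is \textup{(5.17)}. For the wild part I redo the argument of Lemma~\ref{lem:fiberproduct} with $W_2$ in place of the additive group. Choose an Artin--Schreier--Witt generator $Y=(Y_0,Y_1)$ of $K(X)/K(X/C_3)$. For $\tau\in C_3$, the difference $\tau(Y)-Y$ lies in $W_2$ of the fixed field $K(X/C_4)(\ldots)$, and $\tau\mapsto\tau(Y)-Y$ is a $1$-cocycle of $C_3$ with values in a $W_2$-module. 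Since $3$ is invertible in $W_2$, this $H^1$ vanishes. After changing $Y$ by a coboundary we get $\wpw(Y)\in W_2(K(z))$, so $X$ is the compositum over $K(z)$ of $x^3=z$ with a $C_4$-cover of the $z$-line. That $C_4$-cover is branched only at $z=0$, with upper breaks $(1,2)$. (The break at $z=0$ computed on $X$ and on the base agree, because the tame extension does not change upper breaks of the $C_4$-part in the normalized sense used above.)

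Then I normalize. Take a reduced representative as in the proof of Theorem~\ref{thm:C4normal}, using only poles at $z=0$ and removing constants via $\wpw(W_2(K))$. The formula $u_1=\deg f_0$, $u_2=\max\{2u_1,\deg f_1\}$ from \cite[Theorem~2.3]{ElderKeating2026}, applied in the variable $1/z$, gives $f_0=a/z$ with $a\neq0$. Since reduced polar terms have odd exponent and $\deg f_1\leq 2$, it also gives $f_1=b/z$. Scaling $z\mapsto a z$, together with $x\mapsto a^{1/3}x$ so that \textup{(5.17)} is preserved, makes $a=1$ and yields \textup{(5.18)} with $\lambda=b$. Note that $\lambda=0$ is allowed, since $u_2=2$ already comes from $2u_1$.

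For the converse, given $\lambda\in K$, the class $(1/z,\lambda/z)$ has exact order four, because $1/z$ is not a coboundary, and upper breaks $(1,2)$. On the rational curve $x^3=z$ the pole at $z=0$ becomes a pole of order $3$ at $x=0$, and the upper breaks become $(3,6)$, so the lower breaks are $(3,9)$. Riemann--Hurwitz for the $C_4$-cover of the $x$-line gives
\[
2g-2=-8+(3\cdot 4+9-3+3\cdot0)\ \text{(i.e. } d=3+2\cdot3+9=18\text{)}=10,
\]
so $g=6$. The compositum has Galois group $C_3\times C_4\cong C_{12}$. The main obstacle I expect is the $W_2$-version of the cocycle descent, together with checking that reduction and the break formula can be applied at the pole $z=0$ without introducing the spurious even-exponent terms that had to be removed by hand in Theorem~\ref{thm:C4normal}; here $\deg f_1\leq 2$ keeps that step short.
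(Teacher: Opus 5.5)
Your proposal is correct and follows essentially the paper's route. The ramification data come from Proposition~\ref{prop:a2}, and your direct Riemann--Hurwitz computation $2j_1+j_2=15$ is the paper's equality case; the upper jumps $(1,2)$ come from Herbrand's function and compatibility with quotients, the cubic part is $x^3=z$, and the break formula forces a reduced Witt vector with only simple poles at $z=0$. Two minor points: the $W_2$-cocycle descent is valid but unnecessary, since by Galois theory $K(X)^{C_3}$ is already a $C_4$-extension of $K(z)$ whose compositum with $K(x)$ is $K(X)$ (and the ASW generator should be for $K(X)/K(x)$, not $K(X)/K(X/C_3)$); and after $z\mapsto az$ the parameter is $\lambda=b/a$ rather than $b$, which is harmless because no uniqueness of $\lambda$ is claimed.
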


\begin{proof}
Here $m=3$.  In the proof of Proposition~\ref{prop:a2}, all inequalities
coming from Lemma~\ref{lem:char2local}(i) must be equalities, because
\[
        2g-2=10=5m-5.
\]
Thus the branch locus consists of one point with full inertia $C_{12}$
and one tame branch point of index three.  In the local estimate
\[
        d=4t-1+2j_1+j_2\geq9t-1
\]
we have $t=3$, so equality gives
\[
        j_1=3,\qquad j_2=9.
\]
The upper numbering is compatible with quotients.  Herbrand's function
for the full inertia group gives the two positive upper jumps
\[
        u_1=\frac{3}{3}=1,\qquad
        u_2=1+\frac{9-3}{6}=2.
        \tag{5.19}
\]

Place the full wild branch at $z=0$ and the tame branch at $z=\infty$.
The quotient by the $C_4$-factor is a cyclic cubic cover of $\Pone$
totally ramified at these two points, hence after scaling the coordinate
it is
\[
        x^3=z.
\]
The quotient by the $C_3$-factor is a $C_4$-cover of the $z$-line,
branched only at $z=0$, with upper jumps $(1,2)$ by compatibility of
upper numbering with quotients.

Let $(f_0,f_1)$ be a reduced Witt representative for this $C_4$-cover.
The break formula of \cite[Theorem~2.3]{ElderKeating2026} gives
\[
        -v_0(f_0)=1,\qquad
        2=\max\{2,-v_0(f_1)\}.
\]
Because a reduced component has no pole order divisible by two, the
only possible polar terms are simple poles.  Since there are no other
branch points, the two components therefore have the form
\[
        f_0=\frac a z+A,\qquad
        f_1=\frac b z+B,
        \qquad a\neq0,
        \tag{5.20}
\]
with $A,B\in K$.

We remove the constants without changing the shape of the polar parts.
Indeed, add a constant coboundary whose first coordinate is $A$.  By
the Witt addition law \textup{(5.9)}, this kills the constant term in
$f_0$ and can only modify the coefficient of $1/z$ and the constant
term in $f_1$.  A second constant coboundary of the form $(0,C)$ then
removes the remaining constant in the second coordinate.  Thus the
cover is represented by
\[
        \left(\frac a z,\frac{b'}z\right)
\]
for some $b'\in K$.  Replace the old base coordinate by
$z_{\rm old}=a z$.  Then the Witt vector becomes
\[
        \left(\frac1z,\frac{\lambda}{z}\right),
        \qquad \lambda=\frac{b'}a.
\]
The cubic equation becomes $x^3=a z$; since $K$ is algebraically closed,
rescaling $x$ by a cube root of $a$ restores the form $x^3=z$.
This proves \textup{(5.18)}.  Notice that $\lambda=0$ is allowed; in that
case the second upper jump is still $u_2=2u_1=2$.

For completeness, the first component $1/z$ is not an
Artin--Schreier coboundary, so the Witt class in \textup{(5.18)} has
exact order four.  Its upper jumps are $(1,2)$, hence its lower jumps
are $(1,3)$ and its different exponent is $8$ at $z=0$.
Transitivity of the different in the compositum
gives
\[
        d_0=2+3\cdot8=26
\]
at the full $C_{12}$ branch point.  The tame branch at infinity has
index three and contributes four points, each with different exponent
two.  Hence
\[
        2g-2=-24+26+4\cdot2=10,
\]
so $g=6$.  Since the two subextensions have coprime degrees, the
compositum has Galois group
\[
        C_4\times C_3\cong C_{12}.
\]
\end{proof}

\section{The boundary theorem}

\begin{theorem}\label{thm:boundary}
Let $K$ be algebraically closed, let $X/K$ be a smooth projective curve
of genus $g\geq2$, and assume that
\[
        C_N\leq\Aut(X),
        \qquad N\in\{2g,2g-1\}.
\]
Then $X/C_N\cong\Pone$.

If $\Char(K)=0$, or if $\Char(K)=p>0$ with $p\nmid N$, the
possibilities are exactly those in Theorem~\ref{thm:tame}.  If
$\Char(K)=p\geq3$ and $p\mid N$, the Sylow $p$-subgroup has order $p$
and the possibilities are exactly those in Theorem~\ref{thm:oddwild}.
If $\Char(K)=2$, then for $N=2g-1$ the action is tame, while for
$N=2g$ the wild possibilities are precisely
\textup{(5.3)}, \textup{(5.6)}, \textup{(5.7)},
Theorem~\ref{thm:C4normal}, and Theorem~\ref{thm:C12normal}.
\end{theorem}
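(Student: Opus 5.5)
The plan is to assemble the theorem from the results already proved, organized by the characteristic and by whether $p$ divides $N$. First, $X/C_N\cong\Pone$ is Proposition~\ref{prop:rationalquotient}, since $N\geq2g-1>2g-2$. The tame clause, $\Char(K)=0$ or $p\nmid N$, is exactly Theorem~\ref{thm:tame}. Both directions of that theorem are already in hand: the signature enumeration gives necessity, and the Kummer models listed after it give existence.

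For $\Char(K)=p\geq3$ with $p\mid N$, Proposition~\ref{prop:p-part} shows that the Sylow $p$-subgroup has order $p$. Theorem~\ref{thm:oddwild} then gives the complete list together with its converse. I would record the case structure explicitly. Lemma~\ref{lem:number-wild} gives $q\in\{1,2\}$. Proposition~\ref{prop:twowildodd} settles $q=2$. For $q=1$, inequality (4.6) bounds $s$, and Propositions~\ref{prop:s0s1}, \ref{prop:s2t1}, \ref{prop:s2tgt1} and~\ref{prop:s3} exhaust $s=0,1,2,3$. The case $s=2$ is split according to whether $t=1$ or $t>1$. This shows that no case has been omitted.

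For $\Char(K)=2$ I would split by the parity of $N$. If $N=2g-1$, then $N$ is odd, so $2\nmid N$ and the action is tame. If $N=2g$, write $N=2^am$ with $m$ odd; Proposition~\ref{prop:2part} gives $a\leq2$.

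When $a=1$, the number of wild branch points is one or two, because each contributes at least $1$ and the total is $<3$. Proposition~\ref{prop:two-wild-char2} gives (5.3) for two wild points, and Proposition~\ref{prop:onewild-char2} gives (5.6) and (5.7) for one. When $a=2$, Proposition~\ref{prop:a2} forces $(N,g)=(4,2)$ or $(12,6)$, which are treated by Theorems~\ref{thm:C4normal} and~\ref{thm:C12normal} respectively. Each of these results also verifies that its models really have genus $g$ and cyclic group $C_N$, so the word ``precisely'' is justified.

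The main point of care is the exhaustiveness bookkeeping rather than any new estimate. In the wild odd case I must check that the dichotomy $t=1$ versus $t>1$ for $s=2$, and the inclusion of the subcase $t=1$ inside the proof of Proposition~\ref{prop:s3}, really cover every configuration. In the $a=1$ characteristic-two case I must make sure the argument also excludes the degenerate possibility $m=1$. That case is impossible because $g=m\geq2$, and in any event $m$ odd with $g=m\geq2$ forces $m\geq3$.
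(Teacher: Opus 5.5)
Your proposal is correct and is essentially the paper's proof. The paper likewise assembles the theorem from Proposition~\ref{prop:rationalquotient}, Theorem~\ref{thm:tame}, Proposition~\ref{prop:p-part} with Theorem~\ref{thm:oddwild}, and the characteristic-two results, and takes the converses from the individual proofs; your explicit case tree only spells this out, with one small unstated point: for $a=1$, the existence of at least one wild branch point follows from Lemma~\ref{lem:generate-stab}, exactly as in Proposition~\ref{prop:2part}.
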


\begin{proof}
The quotient statement is Proposition~\ref{prop:rationalquotient}.  The
three characteristic cases are Theorems~\ref{thm:tame},
\ref{thm:oddwild}, \ref{thm:C4normal}, \ref{thm:C12normal} and
Propositions~\ref{prop:two-wild-char2}, \ref{prop:onewild-char2}.
The converse assertions were verified in the corresponding proofs.
\end{proof}

Table~\ref{tab:summary} collects all boundary cases.  For a wild branch
point we give the order of the inertia group and its positive lower
ramification breaks; for a tame branch point we give its ramification
index.

\begin{table}[ht]
\centering
\small
\renewcommand{\arraystretch}{1.15}
\begin{tabular}{@{}l c c l l@{}}
\toprule
$\Char(K)$ & $N$ & $g$ & ramification data & reference\\
\midrule
\multicolumn{5}{@{}l}{\emph{Tame actions: $\Char(K)=0$, or $\Char(K)=p\nmid N$}}\\
 & $N$ even, $N\geq4$ & $N/2$ & signature $(2,2,N,N)$ & Thm.~\ref{thm:tame}\\
 & $6$ & $3$ & signature $(2,3,3,6)$ & Thm.~\ref{thm:tame}\\
 & $12$ & $6$ & signatures $(2,3,4,12)$, $(3,3,4,4)$ & Thm.~\ref{thm:tame}\\
 & $30$ & $15$ & signature $(2,3,5,30)$ & Thm.~\ref{thm:tame}\\
 & $3$ & $2$ & signature $(3,3,3,3)$ & Thm.~\ref{thm:tame}\\
 & $15$ & $8$ & signature $(3,3,5,5)$ & Thm.~\ref{thm:tame}\\
\midrule
\multicolumn{5}{@{}l}{\emph{Wild actions, $p$ odd}}\\
$3$ & $3$ & $2$ & wild $C_3$ $[1]$, wild $C_3$ $[1]$ & \textup{(4.3)}\\
$3$ & $6$ & $3$ & wild $C_6$ $[4]$; tame $2$ & \textup{(4.8)}\\
$3$ & $12$ & $6$ & wild $C_3$ $[1]$; tame $4,4$ & \textup{(4.11)}\\
$3$ & $12$ & $6$ & wild $C_{12}$ $[4]$; tame $2,4$ & \textup{(4.16)}, \textup{(4.17)}\\
$5$ & $30$ & $15$ & wild $C_{30}$ $[6]$; tame $2,3$ & \textup{(4.16)}, \textup{(4.17)}\\
$3$ & $30$ & $15$ & wild $C_{30}$ $[10]$; tame $2,5$ & \textup{(4.16)}, \textup{(4.17)}\\
$3$ & $15$ & $8$ & wild $C_3$ $[1]$; tame $5,5$ & \textup{(4.11)}\\
$5$ & $15$ & $8$ & wild $C_5$ $[1]$; tame $3,3$ & \textup{(4.11)}\\
\midrule
\multicolumn{5}{@{}l}{\emph{Wild actions, $p=2$ (here $N=2g$; for $N=2g-1$ the action is tame)}}\\
$2$ & $2m$, $m\geq3$ odd & $m$ & wild $C_{2m}$ $[m]$, wild $C_{2m}$ $[m]$ & \textup{(5.3)}\\
$2$ & $6$ & $3$ & wild $C_6$ $[3]$; tame $3,3$ & \textup{(5.6)}\\
$2$ & $30$ & $15$ & wild $C_{30}$ $[15]$; tame $3,5$ & \textup{(5.7)}\\
$2$ & $4$ & $2$ & wild $C_4$ $[1,5]$ & Thm.~\ref{thm:C4normal}\\
$2$ & $12$ & $6$ & wild $C_{12}$ $[3,9]$; tame $3$ & Thm.~\ref{thm:C12normal}\\
\bottomrule
\end{tabular}
\caption{The cyclic groups $C_N\leq\Aut(X)$ with $N\in\{2g,2g-1\}$.
Brackets contain the positive lower ramification breaks.}
\label{tab:summary}
\end{table}

\begin{remark}
\rm{The list is consistent with lifting to characteristic zero.  By the
local--global principle (see \cite{Obus2012}) and the proof of the Oort
conjecture \cite{ObusWewers2014,Pop2014}, every pair $(X,C_N)$ in
characteristic $p>0$ lifts to characteristic zero, with the same genus
and the same group.  Consequently every pair $(N,g)$ occurring in the
wild part of Table~\ref{tab:summary} must already occur in its tame part,
which describes the situation over $\mathbb C$.  This is indeed the case.
Moreover, in each case the normalized different of every wild branch
point is the sum of the normalized differents of some branch points of a
complex signature with the same $N$ and $g$, as one expects from the
conservation of the different under lifting; for instance, for $p=3$ and $N=30$ one
has $\frac{49}{30}=\frac23+\frac{29}{30}$, matching the points of
indices $3$ and $30$ in $(2,3,5,30)$.}
\end{remark}

\begin{remark}
\rm{Combining Theorem~\ref{thm:boundary} with the characteristic-zero
classification of Irokawa and Sasaki \cite{IrokawaSasaki1995}, the
odd-characteristic classification of Dionigi--Giulietti--Timpanella
\cite{DGT2026}, and the characteristic-two classification of Timpanella
\cite{Timpanella2026}, one obtains the classification for cyclic
subgroups satisfying
\[
        N>2g-2.
\]
For the boundary cases treated here, every pair $(X,C_N)$ is represented
by one of the families above, up to the stated coordinate and generator
changes.  Distinct auxiliary parameter values may still define
isomorphic pairs unless uniqueness is explicitly asserted.}
\end{remark}

\section*{Acknowledgements}
The second author was partially supported by CNPq (grant no.~302774/2025-4),
FAEPEX (grant no.~3485/25), and FAPESP (grant no.~2024/00923-6).  

\section*{Statements and Declarations}

\medskip

\medskip
\noindent\textbf{Data availability.}
No datasets were generated or analysed during the current study.

\medskip
\noindent\textbf{Declaration on the use of artificial intelligence.}
Generative AI tools were used during the preparation of the manuscript to
assist with language editing, checking algebraic computations, and
stress-testing proof arguments.  All mathematical statements, proofs, and
references were independently verified by the authors, who take full
responsibility for the content of the manuscript.

\end{document}